%% file: main.tex
\documentclass[10pt]{amsart}
\input{preamble}

\DefCurrentAudience{shareable}

\title{Arithmetic intersections on non-split Cartan modular curves}
\author{Jonathan Love, \'{E}lie Studnia, and Jan Vonk}

\email{\texttt{j.love@math.leidenuniv.nl}\textnormal{, Mathematical Institute, University of Leiden,  2333 CC Leiden}}
\email{\texttt{e.n.g.studnia@math.leidenuniv.nl}\textnormal{, Mathematical Institute, University of Leiden,  2333 CC Leiden}}
\email{\texttt{j.b.vonk@math.leidenuniv.nl}\textnormal{, Mathematical Institute, University of Leiden,  2333 CC Leiden}}

\begin{document}

\begin{abstract}
Let $p$ be a prime number, and let $\Delta_1,\Delta_2 < 0$ be two coprime fundamental discriminants. When $p$ \textit{splits} in $\Q(\sqrt{\Delta_1})$ and $\Q(\sqrt{\Delta_2})$ the height pairings of the corresponding CM divisors on $X_{\spl}^+(p)$ were determined by Gross--Kohnen--Zagier \cite{GKZ87}. When $p$ is \textit{inert}, we determine the arithmetic intersection numbers of the corresponding divisors on $X_{\ns}^+(p)$ at all finite primes. The key point of our analysis is at the prime of bad reduction $p$: to determine the intersection numbers at $p$, we provide a moduli interpretation for the smooth locus in the regular model of $X_{\ns}^+(p)$ over $\mathrm{Spec}(\Z)$ constructed by Edixhoven--Parent \cite{EP24}.
\end{abstract}

\maketitle
\tableofcontents

\section{Introduction}

The work of Heegner \cite{Hee52} reduces the classification of negative discriminants of class number one to the determination of integer solutions to the Diophantine equation
\begin{equation}
\label{eqn:Xns24}
y^2 = x^3 + 8.
\end{equation}
As explained by Serre \cite[Appendix]{Ser97}, the strategy of Heegner may be interpreted, and framed more generally, as the determination of the set of rational points, integral with respect to the cusps, on a non-split Cartan modular curve $X_{\ns}^+(N)$. The case employed by Heegner is $X_{\ns}^+(24)$ which is an elliptic curve described by the equation \cref{eqn:Xns24}. Siegel \cite{Sie68} similarly considered the elliptic curve $X_{\ns}^+(15)$, and used it to give an alternative proof\footnote{The first proof with prime level was given by Kenku \cite{Ken85} for $N=7$. The curve $X_{\ns}^+(11)$ is an elliptic curve of rank one, see Ligozat \cite{Lig77}, and has also been used to resolve the class number one problem by Schoof--Tzanakis \cite{ST12}. The rational points on $X_{\ns}^+(13)$ and $X_{\ns}^+(17)$, curves of genus $3$ and $6$ respectively, have also been determined \cite{BDMTV19,BDMTV23}. } of the class number one problem.

\par In spite of this momentous early application of CM points on non-split Cartan curves, our knowledge on rational points on $X_{\ns}^+(p)$ remains limited. It is the sole remaining obstacle for establishing Serre uniformity \cite{Ser72}, which asks whether there exists a constant $C$ such that the Galois representation
\[
\rho_{E,p} : \mathrm{Gal}(\overline{\Q}/\!\Q) \ \lra \ \mathrm{Aut}_{\F_p}(E[p]) \simeq \mathrm{GL}_2(\F_p)
\]
is surjective for all non-CM elliptic curves $E/\Q$, and all primes $p > C$. It is widely believed that $C=37$ suffices. The work of Serre \cite{Ser72}, Mazur \cite{Maz77}, Bilu--Parent--Rebolledo \cite{BP11,BPR13} reduces this to the question of whether $X_{\ns}^+(p)(\Q)$ consists entirely of CM points, for $p$ large enough. 

This paper makes a study of the arithmetic geometry of non-split Cartan modular curves, and their CM points. Our first result concerns a moduli interpretation over $\mathrm{Spec}(\Z)$ of the minimal regular model.

\begin{minipage}{.1\textwidth}
\begin{tikzpicture}[scale=.7]
  \def\a{0.15}

  \draw[thick, domain=-2:2, variable=\t]
    plot ({.2*\t*\t},{\t});

  \draw[thick, domain=-2:2, variable=\t]
    plot ({-.07*\t*\t},{\t});

  \draw[thick, domain=-2:2, variable=\t]
    plot ({.07*\t*\t},{\t});

  \draw[thick, domain=-2:2, variable=\t]
    plot ({-.2*\t*\t},{\t});
    
  \fill (0,0) circle (1.5pt);
\end{tikzpicture}

\end{minipage}\hfill
\begin{minipage}{.84\textwidth}
Edixhoven--Parent \cite{EP24} determine a minimal regular model for $X_{\ns}^+(p)$ over $W(\overline{\F}_p)$. It is constructed from the models of Katz--Mazur \cite{KM85} for $X(p)$ via a sequence of blow-ups, quotients, and blow-downs. Its special fibre at $p$ consists of a collection of projective lines, indexed by the supersingular $j$-invariants, which all meet at one point. They remark:\vspace{.1cm}

\begin{quote}
\textit{``Those $\PP^1$s [...] in the non-split cases have no modular interpretation, as they just result from blow-ups in the course of the regularization process.''}
\end{quote}
\end{minipage}

\par \noindent \Cref{sec:moduli} offers a moduli interpretation for the smooth locus of this model. It relies on the observation that an elliptic curve with non-split Cartan structure defined over $\Q_p^{\rm ur}$ has supersingular reduction if $p>7$. More generally, for any prime $p$ and $n\geq 1$, we study the coarse moduli space over $\mathrm{Spec}(\Z)$ of the functor
\begin{equation}
\label{eqn:moduli-functor}
\begin{array}{lcll}
\cM_{\ns}^+(p^n) : & 
\boldsymbol{\underline{\mathrm{Sch}}}_{\Z} & \lra & \boldsymbol{\underline{\mathrm{Set}}}\\
 & S &\longmapsto & \{ E/S \ \ \mbox{with} \ \ \Z_{p^2}\!/(p^n)\hookrightarrow \mathrm{End}_S(E[p^n]) \}/\sim  
\end{array}
\end{equation}
of isomorphism classes of elliptic curves $E/S$ with an embedding of $\Z_{p^2}\!/(p^n)$ into the endomorphism ring of the group scheme $E[p^n]$ over $S$, with conjugate embeddings identified. Here, $\Z_{p^2}$ is the Witt ring of $\F_{p^2}$.

\par The special fibre is a disjoint union of $\A^1$'s, which may be explained as follows. For a supersingular elliptic curve $E / S/ \F_p$, $E[p]$ represents a non-trivial class in $\mathrm{Ext}^1_S(\boldsymbol{\alpha}_p,\boldsymbol{\alpha}_p)$. Given an element $\lambda$ of $\mathrm{End}_S(\boldsymbol{\alpha}_p/S) \simeq \A^1(S)$ we obtain an endomorphism of $E[p^n]$ by composition
\begin{equation}
\label{eqn:alpha_p}
E[p^n] \ \stackrel{p^{n-1}}{\lra} \  
E[p] \lra \boldsymbol{\alpha}_p \stackrel{\lambda}{\lra} \boldsymbol{\alpha}_p \lra E[p] \ \lra \ E[p^n].
\end{equation}
This induces an action of $\A^1(S)$ on the set $\{ \Z_{p^2}\!/(p^n)\hookrightarrow \mathrm{End}_S(E[p^n])\}$ by adding this endomorphism to the image of a fixed algebra generator of $\Z_{p^2}\!/(p^n)$. The first main result is: 

\begin{thmx}\label{thm:A}
Let $p$ be a prime number. The coarse moduli scheme $\cX_{\ns}^+(p^n)$ associated with \cref{eqn:moduli-functor} is smooth over $\mathrm{Spec}(\Z)$. Its geometric special fibre at $p$ is the disjoint union of finitely many\footnote{\label{footnote:count} When $p \geq 5$, the number of components is the sum of $1 + \frac{2(p^{2n-2}-1)}{|\mathrm{Aut}(E)|}$ over all supersingular curves $E/\overline{\F}_p$, see \cref{rmk:component count}.} components isomorphic to $\A^1$.\vspace{-.3cm} 
\end{thmx}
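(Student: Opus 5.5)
The plan is to realise $\cX_{\ns}^+(p^n)$ as a quotient of a smooth moduli stack by a finite group, and to verify smoothness and compute the special fibre via deformation theory.

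\textbf{Step 1: Pass to the stack and rigidify.} Let $\cM$ be the moduli stack of pairs $(E,\iota)$ with $\iota\colon \Z_{p^2}/(p^n)\hookrightarrow \mathrm{End}_S(E[p^n])$ an embedding, before identifying conjugates. There is a forgetful map $\cM\to\cM_{1,1}$. Write $\Z_{p^2}=\Z_p[\alpha]$ for a fixed generator $\alpha$ with minimal polynomial $f$. Then $\iota$ is the same datum as an endomorphism $\phi=\iota(\alpha)$ of $E[p^n]$ with $f(\phi)=0$. Thus $\cM$ is the closed substack of the finite flat $\cM_{1,1}$-stack $\underline{\mathrm{End}}(E[p^n])$ cut out by $f(\phi)=0$, which is relatively representable and finite. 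The coarse space $\cX_{\ns}^+(p^n)$ is the coarse space of $\cM/(\Z/2)$, where $\Z/2$ acts by Frobenius conjugation. To deduce smoothness of the coarse space from smoothness of the stack, I would add auxiliary level structure at a prime $\ell\neq p$, with $\ell\ge 3$. This makes the stack a smooth scheme with a finite group action. The group acts freely away from finitely many points, and over a smooth $1$-dimensional relative curve the quotient stays smooth: tame quotients of smooth curves are smooth, and away from $p$ this is automatic by \'etaleness. Where wild stabilisers might occur at $p$ ($p=2,3$), I would handle the quotient by hand, using that quotients of normal one-dimensional schemes over a DVR with smooth generic fibre and reduced special fibre consisting of disjoint $\A^1$'s remain smooth. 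I expect this to be a nuisance rather than the main obstacle.

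\textbf{Step 2: Smoothness away from $p$ and at ordinary points.} Over $\Z[1/p]$, $E[p^n]$ is étale and locally $(\Z/p^n)^2$. An embedding of $\Z_{p^2}/(p^n)$ is then étale-locally a choice from a finite set, the embeddings $\Z_{p^2}/p^n\to M_2(\Z/p^n)$, so $\cM\to\cM_{1,1}$ is finite étale there. At an ordinary point in characteristic $p$, $E[p^n]$ is an extension of an étale group by $\mu_{p^n}$. Any endomorphism preserves the connected part $\mu_{p^n}$ and acts on it and on the étale quotient by scalars in $\Z/p^n$. Reducing mod $p$ then gives a root of $f$ in $\F_p$, which is impossible since $f$ is irreducible mod $p$. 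So no ordinary points lie on the special fibre.

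\textbf{Step 3: Supersingular locus (the main obstacle).} Fix a supersingular $E_0/\overline{\F}_p$ and $\phi_0$ with $f(\phi_0)=0$. I would compute the complete local ring of $\cM$ at $(E_0,\phi_0)$ by Grothendieck--Messing and Serre--Tate. The deformations of $E_0$ are $W[[t]]$. The condition that $\phi_0$ extends to an endomorphism of $E[p^n]$ over a thickening is the delicate point, because $E[p^n]$ is not the full $p$-divisible group; there is no rigidity, as the $\alpha_p$-action \cref{eqn:alpha_p} shows. My approach would be:
\begin{enumerate}
\item Show that the special fibre of $\cM$ over the point $j(E_0)$ is a torsor under the additive group of \cref{eqn:alpha_p}. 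Concretely, over an $\F_p$-scheme $S$ with $E/S$ supersingular, any two solutions of $f(\phi)=0$ lifting the same mod-$p$ reduction differ by $p^{n-1}$-level terms of $\A^1$ type. I would establish this by a direct Dieudonné-module computation.
\item Show that deforming $E$ in characteristic $p$ away from $j(E_0)$ kills all solutions. Since $j$ must stay supersingular, the special fibre lies over the finitely many supersingular $j$-invariants.
\item Count the $\overline{\F}_p$-points. Embeddings $\F_{p^2}\to\mathrm{End}(E_0[p])$ compatible with the Dieudonné structure give one family up to conjugation. Lifts of these to level $p^n$ contribute $p^{2n-2}$ choices modulo $\mathrm{Aut}(E_0)$, which accounts for the footnoted count $1+2(p^{2n-2}-1)/|\mathrm{Aut}(E)|$.
\end{enumerate}

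Finally, I would show that the local ring is $W[[s]]$ with $s$ the $\A^1$-coordinate. Equivalently, the map $\cM\to\Spec\Z$ is flat with reduced one-dimensional fibres. Flatness follows from the fact that every $\overline{\F}_p$-point lifts to characteristic $0$: take a CM lift by an order in which $p$ is inert, realising $\phi$. Combined with reducedness of the fibre and the fibre being a disjoint union of smooth $\A^1$'s, this gives smoothness by the fibrewise criterion.

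Step 3, and in particular computing the embedding scheme over the supersingular locus scheme-theoretically (reducedness and the $\A^1$-structure), is where I expect the real work to lie.
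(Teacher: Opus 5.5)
Your architecture matches the paper's. You rigidify by an auxiliary étale level, show the fine moduli scheme is étale over $\Z[1/p]$ with char-$p$ fibre a disjoint union of $\A^1$'s over the supersingular points, get flatness from CM lifts, and pass to the finite quotient. The gap is that the step carrying all the weight is only asserted, and Step 2 does not supply it.

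Ruling out ordinary \emph{geometric} points via $\mu_{p^n}$ is set-theoretic. It does not stop the structure from extending to $k[t]/(t^2)$ inside the universal deformation $k[[t]]$ of $E_0$. If it did extend, the special fibre would be non-reduced and your fibrewise smoothness criterion would fail.

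The missing idea is that the Hasse invariant vanishes \emph{identically}, not merely nilpotently, on any $\F_p$-algebra $R$ carrying the structure. A generator of $\F_{p^2}$ acts on the invertible module $\mathrm{Lie}(E[p]/R)=\mathrm{Lie}(E/R)$ by some $\zeta$ with $\zeta-\zeta^p\in R^\times$. For a generator $x$, compatibility with the $p$-operation gives $\zeta x^{(p)}=(\zeta x)^{(p)}=\zeta^p x^{(p)}$, so $x^{(p)}=0$. Since the Hasse invariant has simple zeros on the universal deformation, this confines the char-$p$ problem scheme-theoretically to the reduced supersingular points. Step 3 then reduces to the scheme $\{\phi\in\underline{\mathrm{End}}_k(E[p^n]) : f(\phi)=0\}$ for a fixed $E/k$.

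There you still need three facts on arbitrary $k$-algebras, not just on $\overline{k}$-points. First, $\underline{\mathrm{End}}_k(E[p^n])$ is smooth with identity component $\nu(\underline{\mathrm{End}}(\boldsymbol{\alpha}_p))\simeq\A^1$. Second, its components are indexed by $\OO_E/p^{n-1}\mathfrak{P}_E$. Third, it satisfies the twisted product rule $(u+\nu(\beta))(v+\nu(\gamma))=uv+\nu(u_0\gamma+v_0^p\beta)$. Because $\nu(\lambda)$ acts trivially on Lie algebras, a component meeting $\{f=0\}$ has a base point $\phi\in\OO_E$ with $\phi_0+\phi_0^p=-a$. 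Hence $f(\phi+\nu(\lambda))=f(\phi)$ for all $\lambda$, and $f=0$ cuts out whole components scheme-theoretically. Your ``two solutions differ by $\A^1$-type terms'' gives only a torsor on points. It yields neither reducedness nor the $\A^1$-structure.

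Some further claims are wrong as stated or unproved.
\begin{itemize}
\item In Step 1, $\underline{\mathrm{End}}(E[p^n])$ is neither finite nor flat over $\cM_{1,1}$. Over a supersingular point it contains $\A^1$, as your own Step 3 says. The moduli problem is only relatively representable and affine of finite presentation, which suffices for representability and the coarse space.
\item In the flatness step, CM lifts do \emph{not} reach every $\overline{\F}_p$-point. The reduction of a CM structure lies in the image of $\End(E_0)$, which meets each component $\phi+\nu(\A^1)$ in just $p^2$ points.
\item What you need, and what CM lifts do give, is one lifted point per component. Lift $u(z)$ to $\tilde z\in\End(E_0)$; then $\Z_p[\tilde z]\simeq\Z_{p^2}$, so $(E_0,\tilde z)$ lifts over $W$. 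Since each component is irreducible of dimension one and the special fibre is reduced, the flat closure of the generic fibre is then everything.
\item Tameness is irrelevant for the quotient: a quotient of a smooth relative curve over a regular base by a finite group is smooth, by the norm/Weierstrass argument.
\item Your claim that automorphisms have only finitely many fixed points on the special fibre does need proof. It is what gives coarse base change, i.e.\ identifies the special fibre of the coarse space with $\coprod_E \cC_{\ns}^+(p^n)_{E/\overline{\F}_p}/(\Aut(E)/\{\pm1\})$.
\item For $p=2,3$, an automorphism $\phi$ with $\phi_0^2=1$ acts on a stable component by a translation, and you must rule out that this translation is trivial. The paper does this by showing that $\phi u\phi^{-1}=u$ would force $u(z)-c$ to be nilpotent for some $c\in\F_p$.
\end{itemize}
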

\newcounter{mycounter}
\setcounter{mycounter}{\thefootnote}
\noindent When $n=1$ the set of components in the special fibre are canonically in bijection with the isomorphism classes of supersingular elliptic curves $E$ over $\overline{\F}_p$. In \cref{subsec:EP} we show that $\cX_{\ns}^+(p)$ is isomorphic to the smooth non-cuspidal locus of the minimal regular model constructed by Edixhoven--Parent for $p>7$.

\par The second result of this paper is an explicit determination of the arithmetic intersection of a pair of Heegner points on $\cX_{\ns}^+(p^n)$, similar to the result of Gross--Kohnen--Zagier \cite{GKZ87}. 
Let $\Delta_1,\Delta_2 < 0$ be a pair of coprime fundamental discriminants such that $p$ is \textit{inert} in both $K_1 := \Q(\sqrt{\Delta_1})$ and $K_2 := \Q(\sqrt{\Delta_2})$. There are two associated Heegner divisors, of degrees equal to the class numbers of $K_1$ and $K_2$:
\[
P_1,P_2 \ \in \  \mathrm{Div}_{\Q}(X_{\ns}^+(p^n)).
\]
Their Zariski closures in the integral model $\cX_{\ns}^+(p^n)$ over $\Z$ constructed in \cref{thm:A} intersect properly, and we explicitly determine their arithmetic intersection number 
\begin{equation}
\label{eqn:arith-int-def}
\langle P_1, P_2 \rangle \ := \sum_{x \in \mathrm{Max}\cX_{\ns}^+(p^n)}  \mathrm{length}\left( \frac{\cO_{\cX,x}}{(P_1,P_2)} \right) \cdot \log |k_x| 
\end{equation}
where $x$ ranges over closed points on $\cX_{\ns}^+(p^n)$, and we denote $k_x$ for its residue field. Using \cref{thm:A}, we reduce the problem to counting, for each prime $q$, embeddings of quadratic orders 
\[
\cO_1, \cO_2 \ \hookrightarrow \ R \subset B_{q\infty}, \qquad \mathrm{disc}(\cO_i) = \Delta_i. 
\]
Instead of the Eichler orders arising in \cite{GKZ87}, we are faced with \emph{non-split Cartan orders} $R$ (\cref{def:mixed cartan order}). These orders are not Eichler, but they are Bass (or basic) orders in the sense of \cite{CSV21}. To state the explicit formula for the intersection number $\langle P_1, P_2 \rangle$, we define for any positive integers $a,b \in \Z$ the finite set 
\begin{equation}
\label{eqn:S-def}
\mathcal{S}(a,b) :=  \Z_{>0} \ \cap \ \left\{\frac{a - x^2}{4b} : x \in \Z \right\}.
\end{equation}
We define the map $\epsilon$ as in Gross--Zagier \cite[p. 192]{GZ85}: Let $F = \Q(\sqrt{\Delta_1\Delta_2})$ and $\chi$ the genus character of $F$ associated to $K_1K_2/F$. For any prime $q$ define $\epsilon(q) := \chi(\mathfrak{q})$ where $\mathfrak{q}$ is any prime in $F$ above $q$. Extend the definition of $\epsilon$ multiplicatively. The following result is proved in \cref{sec:GZ}: 
\begin{thmx} \label{thm:B}
The intersection number of $P_1$ and $P_2$ on the arithmetic surface $\cX_{\ns}^+(p^n)$ is given by
\vspace{4pt}\begin{equation}
\label{eqn:GZformula}
\langle P_1, P_2 \rangle \ \ = \ \ \ \frac{w_1w_2}{4} \ \ \cdot \!\!\!\!\sum_{\substack{m \in \mathcal{S}(\Delta_1\Delta_2,\;p^{2n}) \\ d \, \mid \, m,\;d\,>\,0}} \epsilon\left(\frac{m}{d}\right) \log(d), \qquad \mbox{where} \ \ w_i := |\cO_i^{\times}|. 
\end{equation}
\end{thmx}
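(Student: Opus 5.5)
We follow the strategy of Gross--Zagier \cite{GZ85} and Gross--Kohnen--Zagier \cite{GKZ87}: first localise the intersection at each prime $q$, then convert local lengths into counts of embeddings of quadratic orders, and finally evaluate these counts with genus theory. Since $\Delta_1,\Delta_2$ are coprime, the Zariski closures of $P_1$ and $P_2$ meet only at points of the special fibres $\cX_{\ns}^+(p^n)_{\overline{\F}_q}$. Such a point corresponds to an elliptic curve over $\overline{\F}_q$ admitting two CM actions, so it must be supersingular. By \cref{thm:A}, $\cX_{\ns}^+(p^n)$ is smooth over $\mathrm{Spec}(\Z)$, so Serre--Tate and Grothendieck--Messing deformation theory are available at every prime. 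The only difference from \cite{GZ85} is that the deformation problem must now include the non-split Cartan structure.

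\textbf{Primes $q\neq p$.} The level structure is étale, so the local computation of Gross--Zagier applies directly. The contribution of $q$ is
\[
\sum_{E \text{ s.s.}} \sum_{\phi} \tfrac{1}{2}\bigl(1+\mathrm{ord}_q(m)\bigr)\log q ,
\]
where $\phi$ runs over pairs of optimal embeddings $\cO_1,\cO_2\hookrightarrow R$ compatible with the Cartan structure. Here $R\subset B_{q\infty}$ is the order of endomorphisms preserving the $\Z_{p^2}/(p^n)$-action, which is a non-split Cartan order in the sense of \cref{def:mixed cartan order}.

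\textbf{The prime $p$.} We use the moduli description of \cref{thm:A} in the same way, but now the Cartan data sit in $E[p^n]$ for supersingular $E$ in characteristic $p$.

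In both cases, a pair of embeddings with generators $(\Delta_1+\sqrt{\Delta_1})/2$ and $(\Delta_2+\sqrt{\Delta_2})/2$ determines the integer
\[
m=\frac{\Delta_1\Delta_2-x^2}{4p^{2n}},
\]
where $x$ is the trace of the product of the two generators. The factor $p^{2n}$ appears because compatibility with the non-split Cartan structure forces a congruence modulo $p^{n}$ on the images of the two embeddings. This is what produces the set $\mathcal S(\Delta_1\Delta_2,p^{2n})$. Indeed, the pair $(\cO_1,\cO_2)$ spans a quaternion order whose discriminant (reduced norm form) is $m$ times the local index at $p$. As in \cite{GKZ87}, we then reduce to counting, for each fixed $x$, the non-split Cartan orders $R$ (together with the multiplicity coming from $\mathrm{Aut}$) containing the order $\cO_1+\cO_2$ of discriminant $p^{2n}m$ inside the definite quaternion algebra ramified at $q$ and $\infty$.

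The final step is to show that this weighted count equals
\[
\frac{w_1w_2}{4}\sum_{d\mid m}\epsilon(m/d)\,\mathrm{ord}_q(d)\log q .
\]
Summing over $q$ then gives \eqref{eqn:GZformula}. We would establish the count prime by prime, using the Bass-order property of $R$ from \cite{CSV21}: each local overorder of a Bass order is determined by its Gorenstein closure, so the local embedding numbers factor.

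\textbf{Main obstacle.} The hardest step is the local analysis at $q=p$. The special fibre there consists of $\A^1$'s, and the length of $\cO_{\cX,x}/(P_1,P_2)$ must be computed via deformations of the $\Z_{p^2}/(p^n)$-action through the action \eqref{eqn:alpha_p}. We need to show that the resulting length is again $\tfrac12(1+\mathrm{ord}_p(m))$, or the appropriate analogue. Our first approach would be Gross's quasi-canonical lift theory: express the Cartan structure as an action of $\Z_{p^2}$ modulo $p^n$ on the formal group, and compute the intersection as the largest $k$ such that both embeddings lift compatibly modulo $p^k$, comparing this with the valuation of $x$. Since $p$ is inert in both $K_i$, the two CM lifts are canonical at $p$, which should make this comparison tractable.
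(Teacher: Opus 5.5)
Your outline is essentially the paper's proof. Smoothness from \cref{thm:A} makes the Gross--Zagier/Conrad formula $\frac12\sum_{k\geq 1}|\mathrm{Isom}_{W/\fq^k}(\bE_1,\bE_2)|$ valid at every $q$, including $q=p$. Compatibility of the Cartan structures becomes $j^{-1}u_2j\in\cO_1+p^n\End_{W/\fq^k}(E_1)$. The theorem then follows by counting non-split Cartan superorders of the Clifford order $S_t$ (with fibres of size $w_1w_2/2$ and the $t\leftrightarrow -t$ orientation pairing) and applying genus theory.

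The step you single out as the main obstacle at $q=p$ is settled exactly by the canonical-lift argument you propose. Serre--Tate together with Gross's $\End_{W/p^k}(E_1)=\cO_1+p^{k-1}\End_{\overline{\F}_p}(E_1)$ gives the criterion $p^{2k-1}\mid m$, so there is no need to analyse the $\boldsymbol{\alpha}_p$-action directly.
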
\vspace{-4pt}
When $p$ \textit{splits} in the quadratic fields $K_1$ and $K_2$ the arguments of Gross--Kohnen--Zagier \cite{GKZ87} show that the intersection numbers of the Heegner divisors on the \textit{split} Cartan normaliser curve $X_{\spl}^+(p^n)$ are given by the same formula. A notable difference is that in the split case, the CM elliptic curves have \textit{ordinary} reduction at $p$, and thus cannot intersect in the bad fibre. In the inert case however, the elliptic curves are \textit{supersingular} at $p$ and therefore may (and frequently do) intersect in the bad fibre.

\begin{example}
\label{subsec:examples}
Consider $X_{\ns}^+(11)$, which is an elliptic curve with minimal Weierstra{\ss} model
\begin{equation}
\label{eqn:Xns11}
    X_{\ns}^+(11) : y^2 + y = x^3 - x^2 - 7x + 10.
\end{equation}
The minimal regular model at $11$ has Kodaira type III, with one component for each of the two supersingular $j$-invariants.
The Heegner discriminants $(\Delta_1, \Delta_2) = (-115,-267)$ give rise to points
\[
P_1 = \left(\sqrt{5},\,\frac{-5+\sqrt{5}}2\right), \qquad
P_2 = \left(\frac{33+\sqrt{89}}{25},\,\frac{189-17\sqrt{89}}{250}\right),
\] 
which have a simple intersection at $11$ on the $j=1$ component. There is another simple intersection at the prime $5$. This is precisely as predicted by \cref{thm:B}, since 
$\cS(\Delta_1\Delta_2,11^2) = \{20,44\}$
and the sum on the right hand side of \cref{eqn:GZformula} evaluates to
\[\log\left(\frac{2\cdot 5\cdot 20}{4\cdot 10}\right)+\log\left(\frac{2\cdot 11\cdot 44}{4\cdot 22}\right)=\log({55}).\]
\end{example}

\par In the body of this paper, a more general version of \cref{thm:B} is proved for modular curves with split Cartan level at some primes, and non-split Cartan level at other primes; this is \cref{thm:C}. Let $N_{\spl}$ and $N_{\ns}$ be coprime positive integers, and consider $\Gamma \leq \SL_2(\Z)$ of the form $\Gamma := \Gamma_{\spl}^+(N_{\spl}) \cap \Gamma_{\ns}^+(N_{\ns})$, see \cref{subsec:mixed-moduli}. 
A modular curve associated to such a level structure will be called \textit{mixed Cartan} and denoted 
\begin{equation}
\label{eqn:mixed-cartan}
X_{\rm Car}^+(N_{\spl},N_{\ns}).
\end{equation}
We will use the phrase \textit{Heegner discriminant for $\Gamma$} for any discriminant $\Delta < 0$ that satisfies 
\begin{equation}
\label{eqn:Heegner-disc}
\displaystyle 
\left(\frac{\Delta}{q}\right) = 1 \ \ \mbox{for all} \  \ q \,|\, N_{\spl}, \hspace{1cm} \mbox{and} \hspace{1cm}
\left(\frac{\Delta}{q}\right) = -1 \ \ \mbox{for all} \ \ q \, |\, N_{\ns}. \\ 
\end{equation}
The reader should not confuse this with common usage of "Heegner hypothesis" to mean that all primes dividing the level are split. Note that Heegner \cite{Hee52} was working with primes that are inert, and our terminology is also used in other works on non-split Cartan curves, for instance Kohen--Pacetti \cite{KP16}. 

\par \Cref{subsec:RW} complements the work of Rebolledo--Wuthrich \cite{RW25}: we determine all arithmetic intersections of any pair of CM points in $X_{\ns}^+(p)(\Q)$, for any prime $p$. \Cref{fig:Xnsp_intersections} contains a complete list of non-trivial arithmetic intersections, proving the expectation of \cite{RW25} about the completeness of their data.

Finally, we mention that our main results might be of interest for the study of rational points on $X_{\ns}^+(p)$ using the geometric quadratic Chabauty method of Edixhoven--Lido \cite{EL23}. The application of the determination of intersection numbers of Heegner points in this context was brought to our attention by Parent, and we refer to the forthcoming paper \cite{HKLLMP} for more details on this topic.

\subsection*{Acknowledgements} We would like to thank Bas Edixhoven and Henri Darmon for several inspiring conversations on the themes of this paper over the years. All three authors were supported by ERC Starting Grant 101076941 (`\textsc{GAGARIN}'). Views and opinions expressed are however those of the author(s) only and do not necessarily reflect those of the European Union or the European Research Council. Neither the European Union nor the granting authority can be held responsible for them.


\section{The moduli space $\cX_{\ns}^+(p^n)$ over $\mathrm{Spec}(\Z)$}
\label{sec:moduli}

The main purpose of this section is to prove \cref{thm:A}. We do this in greater generality and allow for an extra level structure, assumed to be finite \'etale at $p$. Our discussion freely uses the formalism of moduli problems developed in Chapter 4 of \cite{KM85}. Henceforth, we fix a prime $p$ and an integer $n \geq 1$. 

We introduce the moduli problems attached to non-split Cartan structures, and study the geometry of their coarse moduli spaces. In characteristic $p$, only supersingular elliptic curves appear in this moduli space. We begin by describing the $p^n$-torsion of supersingular elliptic curves, and their endomorphism rings. 

\subsection{The $p^n$-torsion of a supersingular elliptic curve}
We first recall some properties of the $p^n$-torsion schemes of supersingular elliptic curves in characteristic $p$. In this subsection, fix an elliptic curve $E$ over a perfect field $k$ of characteristic $p$. Recall that we have an exact sequence 
\begin{equation}
\label{eqn:ext-Ep}
0 \rightarrow \boldsymbol{\alpha}_p \ \overset{\iota}{\lra} \ E[p] \ \overset{\pi}{\lra} \ \boldsymbol{\alpha}_p \rightarrow 0,
\end{equation}
where the closed subgroup $\boldsymbol{\alpha}_p$ is the kernel of the Frobenius isogeny $E \lra E^{(p)}$. 

\begin{lemma}
\label{lemma:supersing-lie} 
For any $k$-algebra $R$, we have: 
\begin{itemize}[noitemsep,label=\tiny$\bullet$]
\item any endomorphism of $E[p]$ over $R$ restricts through $\iota$ to an endomorphism of $\boldsymbol{\alpha}_p$ over $R$,
\item the following natural maps on Lie algebras are isomorphisms of free $R$-modules of rank one:
\[
\mathrm{Lie}(\boldsymbol{\alpha}_p/R) \ \overset{\iota}{\lra} \ \mathrm{Lie}(E[p]/R) \ \lra \ \mathrm{Lie}(E[p^n]/R)\ \lra \ \mathrm{Lie}(E/R),
\]
\item there is an isomorphism of $\F_p$-algebras given by the natural map 
\[
\mathrm{End}_R(\boldsymbol{\alpha}_p/R) \ \stackrel{\sim}{\lra} \ \mathrm{End}(\mathrm{Lie}(\boldsymbol{\alpha}_p/R)) \cong R.\]
\end{itemize}

\begin{proof}
The kernel $G$ of the Frobenius isogeny is a $k$-group scheme of degree $p$ by e.g. \cite[(12.2.1)]{KM85}. Since $E[p]$ is Cartier-dual to itself and $E[p](\overline{k})=\{0\}$, $G$ is not \'etale nor Cartier-dual to a finite \'etale $k$-group scheme. Therefore, by the Oort--Tate classification, $G \simeq \boldsymbol{\alpha}_p$.   

Since $\iota$ is also the kernel of Frobenius on $E[p]$, endomorphisms of $E[p]_R$ can be restricted along $\iota$ by \cite[Exp. $\mathrm{VII}_{\mathrm{A}}$, 4.1.3]{SGA3I}. By \cite[Prop. 1.1 (a)]{LLR04}, the given maps of Lie algebras are injective and it is enough to show that Frobenius kills $\mathrm{Lie}(E/R)$: this is contained in \cite[Exp. $\mathrm{VII}_{\mathrm{A}}$, 4.1.2]{SGA3I}.

The Frobenius $\mathbb{A}^1_k \rightarrow \mathbb{A}^1_k$ is a morphism of ring schemes, so its kernel $\boldsymbol{\alpha}_p$ is a module over the ring scheme $\mathbb{A}^1_k$. It is not difficult to check that for any $k$-algebra $R$, the composition 
\[
R = \mathbb{A}^1_k(R) \lra \mathrm{End}(\boldsymbol{\alpha}_p/R) \lra \mathrm{End}(\mathrm{Lie}(\boldsymbol{\alpha}_p/R)) = R
\]
is the identity. It remains to show that the second map is injective. Let $u \in \mathrm{End}(\boldsymbol{\alpha}_p/R)$ act trivially on $\mathrm{Lie}(\boldsymbol{\alpha}_p/R)$. Then $u$ corresponds to an element $f$ of $t^2R[t]/(t^p)$ such that $f(x+y)=f(x)+f(y)$. Expanding $f(t)=\sum_{j=2}^{p-1}{c_jt^j}$ with $c_j \in R$ shows that $f=0$, i.e. that $u=0$. 
\end{proof}
\end{lemma}

Our geometric study of non-split Cartan moduli problems relies on a study of the endomorphism rings of the group scheme $E[p^n]$. It is a classical result, see e.g. \cite[Lemma 4.1.4]{Stu25}, that the functors 
\begin{equation}
\label{eqn:functor-end}
\underline{\mathrm{End}}_k(E[p^n]) \ : \  T \longmapsto \mathrm{End}_T(E[p^n]_T) \qquad \qquad 
\underline{\mathrm{Aut}}_k(E[p^n]) \ : \ T \longmapsto \mathrm{Aut}_T(E[p^n]_T) 
\end{equation}
from $k$-schemes to sets are representable by an affine ring scheme $E_n$ (resp. group scheme $E_n^{\times}$) of finite type over $k$. Likewise, the functor $E_0 = \underline{\mathrm{End}}_k(\boldsymbol{\alpha}_p)$ of group scheme endomorphisms of $\boldsymbol{\alpha}_p$ is a ring scheme. By \cref{lemma:supersing-lie}, the Lie algebra action yields a canonical isomorphism $E_0 \simeq \mathbb{A}^1_k$ of ring schemes.  
\begin{prop}
\label{prop:supersingular-endomorphisms-modp-smooth}
The scheme $E_n$ is smooth of relative dimension $1$ over $k$. The morphism of group schemes
\[
\nu : E_0 = \underline{\mathrm{End}}_k(\boldsymbol{\alpha}_p) \ \lra \ E_n = \underline{\mathrm{End}}_k(E[p^n])
\]
induced by the sequence \cref{eqn:alpha_p} is an open immersion onto the unit component of $E_n$.
\begin{proof}
Since the maps $\pi$ and $\iota$ from \eqref{eqn:ext-Ep} are faithfully flat, resp. a closed immersion, $\nu$ is a monomorphism of $k$-group schemes. Therefore, $E_n$ is equidimensional of positive dimension by \cite[Exp. $\mathrm{VI}_{\mathrm{B}}$, 1.5]{SGA3I}.

Let $R$ be a $k$-algebra. Any $u \in \mathrm{End}_R(E[p^n]_R)$ induces an $R$-linear endomorphism $u^{\sharp}$ of the finite free $R$-module $\cO(E[p^n]) \otimes_k R$. The rule $u \mapsto \det(u^{\sharp}) \in R$ defines a morphism of schemes 
\[
\det: E_n \ \lra \ \mathbb{A}^1.
\]
Since $u$ is an isomorphism iff $u^{\sharp}$ is an isomorphism of $R$-modules, one has $E_n^{\times} = \det^{-1}(\mathbb{A}^1\backslash \{0\})$, so $E_n^{\times} \rightarrow E_n$ is an open immersion. By \cite[Exp. $\mathrm{VI}_{\mathrm{B}}$, 1.3, 1.5]{SGA3I} and \cite[Cor. 4.9.3]{Stu25}, $E_n$ is smooth of relative dimension $1$ and $\nu$ is an open immersion. By \cite[Exp. $\mathrm{VI}_{\mathrm{B}}$, 1.4.2]{SGA3I}, since $E_0$ is connected, $E_0 \rightarrow E_n$ is a closed open immersion onto the unit component of $E_n$. 
\end{proof}
\end{prop}

We now determine the ring of connected components of the endomorphism scheme $E_n$. A \emph{complete} supersingular elliptic curve is a supersingular $E/k$ such that every geometric endomorphism of $E$ is defined over $k$. Given such an $E/k$, let $\OO_E := \mathrm{End}(E)$ with maximal two-sided ideal $\mathfrak{P}_E \subset \OO_E$, and set
\begin{equation}
\label{eqn:def-Rn}
R_n(E) := \OO_E/p^{n-1}\mathfrak{P}_E.
\end{equation}

\begin{prop}
\label{prop:supersingular-endomorphisms-modp-components}
Suppose $E/k$ is complete. Then the quotient $E_n/E_0$ is a finite constant $k$-scheme, and the obvious map $\OO_E \rightarrow E_n(k)$ induces an isomorphism
\[
R_n(E) \simeq (E_n/E_0)(k).
\]
\begin{proof}
By \cite[Exp. $\mathrm{VI}_{\mathrm{A}}$, 5.5]{SGA3I}, the quotient $Q := E_n/E_0$ exists and is \'etale over $k$. Since $E_n \rightarrow Q$ is surjective, $Q$ is quasi-compact hence finite \'etale. The claim holds over $\overline{k}$ by a Dieudonn\'e module calculation (in the proof of \cite[Prop 4.9.4]{Stu25}, replace automorphisms with endomorphisms). Because $\mathrm{End}(E) = \mathrm{End}(E_{\overline{k}})$, one has $Q(k)=Q(\overline{k})$, so $Q$ is a constant $k$-scheme and the result follows. 
\end{proof}
\end{prop}

\begin{remark}
    For every supersingular $j \in \overline{\F}_p$, there is a complete $E/\F_{p^2}$ with $j(E)=j$, see \cite[\S 2.3.7]{GL25}. 
\end{remark}

A final ingredient is the compatibility between the above decomposition of $E_n$ and its ring structure. Making this precise requires uniquely specifying the map $\pi$, which we have not done so far. 

\begin{lemma}
\label{lem:supersingular-endomorphisms-modp-ring}
Suppose $E/k$ is complete. Choose the map $\pi : E[p] \to \boldsymbol{\alpha}_p$ such that Frobenius is equal to 
\[
\iota^{(p)} \circ \pi : E[p] \ \lra \ E^{(p)}[p]. 
\]
Let $u,v \in \OO_E$ and let $u_0,v_0 \in k$ be their actions on $\mathrm{Lie}(E[p^n]/k)$. Let $S$ be a $k$-scheme, then
\[
(u+\nu( \beta))(v+\nu( \gamma )) = uv+\nu (u_0\gamma + v_0^p\beta), \qquad \qquad {\forall} \ \beta,\gamma \in \mathrm{End}(\boldsymbol{\alpha}_p /S) \simeq \mathbb{A}^1(S)
\]
\begin{proof}
    Since $u \circ \iota = \iota \circ u_0$, it suffices to show that $\pi \circ v=v_0^p\circ \pi$, so we may assume $n=1$. We have
\begin{align*}
\iota^{(p)} \circ \pi \circ v &= \mathrm{Frob}_{E[p]} \circ v = v^{(p)} \circ \mathrm{Frob}_{E[p]} = v^{(p)} \circ \iota^{(p)} \circ \pi = (v \circ \iota)^{(p)} \circ \pi \\
&= (\iota \circ v_0)^{(p)} \circ \pi = \iota^{(p)} \circ v_0^p \circ \pi,
\end{align*}
which implies the conclusion since $\iota^{(p)}$ is a closed immersion. 
\end{proof}

\end{lemma}

\subsection{The moduli problems $\cC_{\ns}(p^n)$ and $\cC_{\ns}^+(p^n)$}
\label{subsec:moduli-probems}
We introduce non-split Cartan moduli problems, and prove their representability. Henceforth, $\Q_{p^2}$ denotes the quadratic unramified extension of $\Q_p$, with ring of integers $\Z_{p^2}$ and Frobenius automorphism $\mathrm{Frob}$. Fix a $\Z_p$-basis $(1,z)$ of $\Z_{p^2}$, and let 
\begin{equation}
\label{eqn:prim-elt}
f(t) = t^2+at+b \in \Z_p[t]
\end{equation}
be the characteristic polynomial of $z$. 

\begin{definition}
\label{def:Cns}
For any elliptic curve $E$ over a scheme $S$, we let $\cC_{\ns}(p^n)(E/S)$ be the set of ring homomorphisms $\Z_{p^2}/(p^n) \rightarrow \mathrm{End}_{S}(E[p^n])$. With the obvious action on morphisms, this defines a functor 
\[
\cC_{\ns}(p^n) : \underline{\mathbf{Ell}}_{\Z} \ \lra \ \underline{\mathbf{Set}}.
\]
\end{definition}

\begin{remark}
If $R$ is any ring, assumed unital but not necessarily commutative, and $\alpha: \Z_{p^2}/(p^n) \rightarrow R$ is a ring homomorphism, its kernel is of the form $p^k\Z_{p^2}/(p^n)$ for some $1 \leq k \leq n$. If $p^{n-1} \neq 0$ in $R$, then $\alpha$ is injective and $\alpha \neq\alpha \circ \mathrm{Frob}$. We will repeatedly use this fact in the following two cases: 
\begin{itemize}[label=\tiny$\bullet$]
    \item $R=\mathrm{End}(E[p^n])$ for some elliptic curve $E/S$,
    \item $R=\cO/I$, where $\cO$ is a quaternion order and $I$ is a two-sided ideal with $p^{n-1} \notin I$. 
\end{itemize} 
\end{remark}

\begin{prop}
\label{prop:nonsplit-basic}
The moduli problem $\cC_{\ns}(p^n)$ is relatively representable and affine of finite presentation. It is finite \'etale above $\Z[1/p]$: for any elliptic curve $E/S/\Z[1/p]$ the map $\cC_{\ns}(p^n)_{E/S} \to S$ is finite \'etale. 
\begin{proof}
For any scheme $S$, there is an obvious bijection between $\cC_{\ns}(p^n)(E/S)$ and the set 
\[
\{u \in \mathrm{End}_{S}(E[p^n]),\,u^2+a \cdot u+b\cdot \mathrm{id} = 0\}.
\]
The functor $T \mapsto \mathrm{End}_T(E_T[p^n])$ is representable by an affine morphism $X \rightarrow S$ of finite presentation \cite[Lemma 4.1.4]{Stu25}, hence $\cC_{\ns}(p^n)$ is relatively representable and affine of finite presentation.

Let $S$ be a scheme, and $G$ be the constant $S$-group scheme of $(\Z/p^n\!\Z)^{\oplus 2}$. The functor sending $T/S$ to the set of morphisms $\Z_{p^2}\!/(p^n) \rightarrow \mathrm{End}(G_T)$ is represented by the constant $S$-scheme attached to 
\[
\{M \in M_2(\Z/p^n\Z),\,M^2+aM+bI_2=0\}.
\]
Assume now that $S$ is a $\Z[1/p]$-scheme and let $E/S$ be an elliptic curve. \'Etale-locally over $S$, the group scheme $E[p^n]$ is isomorphic to the constant group scheme with underlying set $(\Z/p^n\!\Z)^{\oplus 2}$. Hence the $S$-scheme $\cC_{\ns}(p^n)_{E/S}$ is \'etale-locally (over $S$) constant, thus the structure map is finite \'etale. 
\end{proof}

\end{prop}

Over $\Z[1/p]$ \cref{def:Cns} is the classical notion of non-split Cartan structure. For $N\geq 1$, write $[\Gamma(N)]$ for the set of \emph{Drinfeld bases} of the $N$-torsion of an elliptic curve (rather than the $\Gamma(N)$-structure of \cite[(3.1)]{KM85}). Then $\mathrm{GL}_2(\Z/N\!\Z)$ acts on the \emph{left} on $[\Gamma(N)]$: for a Drinfeld basis $P,Q \in E[N](S)$ we have
\[
\begin{pmatrix}a & b\\c & d\end{pmatrix} \cdot (P,Q) = (aP+bQ,cP+dQ), \qquad \qquad \begin{pmatrix}a & b\\c & d\end{pmatrix} \in \GL_2(\Z/N\!\Z). 
\]

\begin{lemma}
\label{lemma:recovers-classical-nonsplit}
Fix a ring homomorphism $\iota: \Z_{p^2}/(p^n) \rightarrow M_2(\Z/p^n\Z)$, and let $\Gamma_{\ns}(p^n)\leq \mathrm{GL}_2(\Z/p^n\Z)$ be the image of $(\Z_{p^2}/(p^n))^{\times}$. The morphism $\pi: [\Gamma(p^n)] \rightarrow \cC_{\ns}(p^n)$ of moduli problems over $\Z[1/p]$, defined by 
\[
(P,Q) \in [\Gamma(p^n)](E/S)\ \  \longmapsto \ \ \left[z \mapsto \left(\!\begin{pmatrix}P\\Q\end{pmatrix} \mapsto \iota(z)\begin{pmatrix}P\\Q\end{pmatrix}\!\right)\right] \in \cC_{\ns}(p^n)(E/S) \] 
factors through an isomorphism 
\[
[\Gamma(p^n)]/\Gamma_{\ns}(p^n) \stackrel{\sim}{\lra} \cC_{\ns}(p^n).
\]

\begin{proof}
Consider $E/S/\Z[1/p]$ and $(P,Q) \in \Gamma(p^n)(E/S)$. By \cite[Prop. 1.10.2 (3)]{KM85}, $(a,b) \mapsto aP+bQ$ is an isomorphism between the constant $S$-group scheme attached to $(\Z/p^n\Z)^{\oplus 2}$ and $E[p^n]$, which implies that $\pi$ is well-defined. Since $\pi$ is invariant under the action of $\Gamma_{\ns}(p^n)$, it factors through a morphism $\tilde{\pi}: [\Gamma(p^n)]/\Gamma_{\ns}(p^n) \rightarrow \cC_{\ns}(p^n)$ by \cite[Theorem 7.1.3]{KM85}, where both sides are finite \'etale relatively representable. To check that $\tilde{\pi}$ is an isomorphism, it is enough by \cite[Corollaire 17.9.5]{EGAIV4} to check that, for any elliptic curve $E$ over an algebraically closed field $k$ of characteristic prime to $p$, 
\[
\pi(E/k): [\Gamma(p^n)](E/k)/\Gamma_{\ns}(p^n) \rightarrow \cC_{\ns}(p^n)(E/k)
\]
is a bijection, which is straightforward. 
\end{proof}
\end{lemma}

We now turn to the moduli problem attached to \emph{normalisers} of non-split Cartan subgroups.

\begin{definition}
The group $\Z/2\!\Z$ acts on $\cC_{\ns}(p^n)$ by pre-composing the given ring homomorphism by the Frobenius automorphism of $\Z_{p^2}/(p^n)$. The moduli problem $\cC_{\ns}^+(p^n)$ is the quotient of $\cC_{\ns}(p^n)$ by $\Z/2\!\Z$. 
\end{definition}

\begin{prop}
\label{prop:nonsplit-plus-basic}
The moduli problem $\cC_{\ns}^+(p^n)$ is well-defined, relatively representable and affine of finite presentation. For any elliptic curve $E/S$, $\cC_{\ns}(p^n)_{E/S}$ is an \'etale $\Z/2\!\Z$-torsor over $\cC_{\ns}^+(p^n)_{E/S}$ through the forgetful map, which induces an isomorphism 
\[
\cC_{\ns}(p^n)_{E/S}\ /\ (\Z/2\!\Z) \ \ \stackrel{\sim}{\lra} \ \ \cC_{\ns}^+(p^n)_{E/S}.
\]
Furthermore, $\cC_{\ns}^+(p^n)$ is finite \'etale over $\Z[1/p]$. The map of \cref{lemma:recovers-classical-nonsplit} induces an isomorphism 
\[
[\Gamma(p^n)]\,/\, \Gamma_{\ns}^+(p^n) \ \ \stackrel{\sim}{\lra} \ \ \cC_{\ns}^+(p^n).
\]

\begin{proof}
Let $E/S$ be an elliptic curve and $\alpha: \Z_{p^2}/(p^n) \rightarrow \mathrm{End}_S(E[p^n])$ be a ring homomorphism: as we saw, $\alpha \neq \alpha \circ \mathrm{Frob}$, so $\Z/2\!\Z$ acts freely on $\cC_{\ns}(p^n)$. Since $\cC_{\ns}(p^n)$ is relatively representable affine of finite presentation, the result follows from \cite[Theorem 7.1.3]{KM85}. 
\end{proof}

\end{prop}

The most interesting features of the moduli problems $\cC_{\ns}(p)$ and $\cC_{\ns}^+(p)$ appear in characteristic $p$. The key to the smoothness results of \cref{subsec:smoothness} is the following. 

\begin{lemma}
\label{lem:hasse-vanishes}
Let $E$ be an elliptic curve over a ring $R$ of characteristic $p$. Suppose that $\cC_{\ns}(p^n)(E/R)$ is not empty. Then the Hasse invariant of $E$ vanishes.\footnote{This is stronger than any base change of $E$ to a field being supersingular, which holds if and only if the Hasse invariant is nilpotent.}

\begin{proof}
We may assume that $n=1$, so that there is a ring homomorphism $\iota: \F_{p^2} \rightarrow \mathrm{End}(E[p])$. Working Zariski-locally over $\operatorname{Spec}(R)$, by \cite[(12.4.1.3)]{KM85} and \cite[Exp. $\mathrm{VII}_{\mathrm{A}}$, \S 6.1]{SGA3I}, it is enough to show that the $p$-Lie algebra structure on $\mathrm{Lie}(E/R)$ is trivial. By \cref{lemma:supersing-lie} and \cite[Exp. $\mathrm{VII}_{\mathrm{A}}$, \S 6.1.1]{SGA3I}, it suffices to show that the structure of $p$-Lie algebra on $\mathrm{Lie}(E[p]/R)$ is trivial.

Let $\alpha \in \F_{p^2}$ be the reduction of $z$, as in \eqref{eqn:prim-elt}, then $\iota(\alpha)$ is an endomorphism of $\mathrm{Lie}(E[p]/R) \simeq R$, it acts by multiplication by some $\zeta \in R$ such that $\zeta^2+a\zeta+b=0$: in particular, $u+v\alpha \in \F_{p^2} \mapsto u+v\zeta \in R$ is a ring homomorphism. Now, let $x \in \mathrm{Lie}(E[p]/R)$ be a generator, then one has \[\zeta x^{(p)} = \iota(\alpha) x^{(p)} = (\iota(\alpha) x)^{(p)}=(\zeta x)^{(p)}=\zeta^p x^{(p)}.\]
Because $\alpha-\alpha^p \in \F_{p^2}^{\times}$, one has $\zeta-\zeta^p \in R^{\times}$, so $x^{(p)}=0$ and the conclusion follows. 
\end{proof}
\end{lemma}

\begin{corollary}
\label{cor:factors-through-supersingular-locus}
    Let $\cP$ be the moduli problem $\cC_{\ns}(p^n)$ or $\cC_{\ns}^+(p^n)$ over $\F_p$. For any $E/S/\F_p$ let $S_0 \subset S$ be the vanishing locus of the Hasse invariant. Then the obvious map of $S$-schemes is an isomorphism: 
    \[\cP_{E_{S_0}/S_0} \ \stackrel{\sim}{\lra} \ \cP_{E/S}.
    \]
    \begin{proof}
        The proof of \cref{lem:hasse-vanishes} shows $S_0$ is well-defined. Working \'etale-locally over $S$, by \cref{prop:nonsplit-plus-basic}, we may assume $S$ is affine and $\cP = \cC_{\ns}(p^n)_{\F_p}$. 
        The conclusion follows from \cref{lem:hasse-vanishes}.
    \end{proof}
\end{corollary}

\begin{prop}
\label{prop:restrict-to-exactly-supersingular}
Let $\cP$ denote the moduli problem $\cC_{\ns}(p^n)$ or $\cC_{\ns}^+(p^n)$ over a perfect field $k$ of characteristic $p$. Let $\mathcal{L}$ be a finite \'etale representable moduli problem over $k$ with fine moduli space $\cM$ and universal elliptic curve $\cE$. Let $H \subset \cM$ be the reduced closed subscheme corresponding to the finite set of closed points with supersingular $j$-invariants, and $E/H$ be the pull-back of $\cE/\cM(\cL)$. Let $q: \cP_{E/H} \rightarrow H$ be the structure morphism. Then the elliptic curve $(q^{\ast}E)/\cP_{E/H}$ represents the moduli problem $\cP \times \cL$. 

\begin{proof}
Let $\tilde{q}: \cP_{\cE/\cM} \rightarrow \cM$, then $(\tilde{q}^{\ast}\cE)/\cP_{\cE/\cM}$ represents $\cP \times \cL$ by \cite[(4.3.4)]{KM85}. By \cref{cor:factors-through-supersingular-locus}, we only need to show that $H$ is the vanishing locus $Z$ of the Hasse invariant in $\cM$. It is enough to check that $H$ and $Z$ are both finite and geometrically reduced $k$-schemes with the same $\overline{k}$-points. By construction, $H$ is finite reduced; $k$ is perfect so it is geometrically reduced. By \cite[Theorem 12.4.3]{KM85}, $Z(\overline{k})$ consists exactly of points with supersingular $j$-invariant, so $Z$ is also finite. Since $\cL$ is \'etale, \emph{loc.cit.} also implies that $Z$ is geometrically reduced. 
\end{proof}

\end{prop}

This result admits the following more concrete formulation over $k=\overline{\F}_p$. 

\begin{corollary}
\label{cor:finer-description-supersing}
Let $\cP$ denote the moduli problem $\cC_{\ns}(p^n)$ or $\cC_{\ns}^+(p^n)$ over $k := \overline{\F}_p$. Let $\mathcal{L}$ be a finite \'etale representable moduli problem over $k$. Let $\cM(\cP,\cL)$ be the fine moduli scheme attached to the moduli problem $\cP \times \cL$. Then the classifying map (cf. \cite[(8.1.3)]{KM85}) defines an isomorphism 
\[f: \coprod_{(E,z) \in \mathcal{L}^{\mathrm{ss}}}{\cP_{E/k}} \ \stackrel{\sim}{\lra} \ \cM(\cP,\cL),\] where $\mathcal{L}^{\mathrm{ss}}$ is the finite set of isomorphism classes of $(E,z)$, where $E/k$ is supersingular and $z \in \mathcal{L}(E/k)$. 
\end{corollary}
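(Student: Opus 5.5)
We deduce this corollary directly from \cref{prop:restrict-to-exactly-supersingular}, applied with $k=\overline{\F}_p$. Let $\cM$ be the fine moduli scheme of $\cL$ with universal curve $\cE$, and let $H\subset\cM$ be the reduced closed subscheme of supersingular points. That proposition says the curve $q^*E$ over $\cP_{E/H}$ represents $\cP\times\cL$. So $\cM(\cP,\cL)\simeq\cP_{E/H}$, and the isomorphism is the classifying map of $(q^*E,\text{tautological }\cP\text{-structure},\text{pulled-back }\cL\text{-structure})$. It remains to identify $\cP_{E/H}$ with the disjoint union in the statement, compatibly with the classifying maps.

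\textbf{Step 1: describe $H$.} Since $k$ is algebraically closed and $H$ is finite and reduced, $H=\coprod_{h\in H(k)}\operatorname{Spec}k$. Because $\cL$ is representable with fine moduli space $\cM$, the points $h\in\cM(k)$ correspond bijectively to isomorphism classes of pairs $(E,z)$ with $E/k$ and $z\in\cL(E/k)$. Such a point lies in $H$ exactly when $E$ is supersingular. Hence $H(k)$ is in bijection with $\cL^{\mathrm{ss}}$, and the restriction of $E/H$ to the component at $h$ is the representative $E_h$ of the corresponding class.

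\textbf{Step 2: split $\cP_{E/H}$.} Formation of $\cP_{E/S}$ commutes with base change in $S$ (relative representability). Therefore
\[
\cP_{E/H}=\coprod_{h}\cP_{E_h/k}.
\]
Composing with the isomorphism from \cref{prop:restrict-to-exactly-supersingular} gives an isomorphism
\[
\coprod_{(E,z)\in\cL^{\mathrm{ss}}}\cP_{E/k}\ \stackrel{\sim}{\lra}\ \cM(\cP,\cL).
\]

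\textbf{Step 3: match with $f$.} We must check that this isomorphism is the classifying map $f$ of \cite[(8.1.3)]{KM85}. On the component indexed by $(E,z)$, $f$ classifies the triple consisting of $E$ pulled back to $\cP_{E/k}$, the universal $\cP$-structure, and the constant structure $z$. This is precisely the restriction to that component of the universal object $(q^*E,\dots)$. Consequently $f$ coincides with the identification built above, which is an isomorphism.

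The only point needing care is the independence of the choice of representative $(E,z)$ in each isomorphism class. Another representative $(E',z')$ differs by an isomorphism, which induces compatible isomorphisms $\cP_{E/k}\simeq\cP_{E'/k}$ commuting with $f$. Beyond this bookkeeping I expect no obstacle: all the substance is contained in \cref{prop:restrict-to-exactly-supersingular}.
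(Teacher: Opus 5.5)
Your proposal is correct and follows essentially the paper's route. The paper gives no separate proof and presents this corollary as the concrete reformulation of \cref{prop:restrict-to-exactly-supersingular} over $k=\overline{\F}_p$, which is exactly what you do: split the finite reduced $H$ into $k$-points indexed by $\cL^{\mathrm{ss}}$, use base change of $\cP_{E/H}$, and identify the resulting isomorphism with the classifying map.
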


It remains to understand the scheme $\cC_{\ns}(p^n)_{E/k}$ where $E/k$ is a complete supersingular curve. Define an action of $\underline{\mathrm{End}}(\boldsymbol{\alpha}_p)$ on $\cC_{\ns}(p^n)_{E/k}$ as follows: for any $k$-algebra $R$, there is a canonical bijection between $\cC_{\ns}(p^n)(E/R)$ and $\{\phi \in \mathrm{End}(E[p^n]_R) : f(\phi) = 0\}$ with $f$ as in \eqref{eqn:prim-elt}. For any $\lambda \in \mathrm{End}(\boldsymbol{\alpha}_p/R)$ set 
\[
\lambda \star \phi := \phi + \nu(\lambda) \quad \in \mathrm{End}(E[p^n]_R). 
\]
Note that $f(\lambda \star \phi) = 0$: By \cref{lem:supersingular-endomorphisms-modp-ring,prop:supersingular-endomorphisms-modp-components} it suffices to show that $\phi_0 + \phi_0^p \equiv -a \pmod{p}$ for any $\phi \in \mathrm{End}(E)$ satisfying $f(\phi) = 0$. This holds since $f(\phi_0)=0$ by \cref{lemma:supersing-lie} so that $\phi_0 \in \F_{p^2}$ and 
\begin{equation}
\label{eqn:polyfac}
t^2+at+b=(t-\phi_0)(t-\phi_0^p) \quad \mbox{in} \ \ k[t]. 
\end{equation}
The following is a consequence of the results above.

\begin{prop}
\label{prop:description-ccns}
Let $E/k$ be a complete supersingular curve. There is an $\underline{\mathrm{End}}(\boldsymbol{\alpha}_p)$-equivariant isomorphism
\[\rho : \cC_{\ns}(p^n)_{E/k} \ \ \stackrel{\sim}{\lra} \ \ \coprod_{u: \Z_{p^2}/(p^n) \rightarrow R_n(E)}{\underline{\mathrm{End}}(\boldsymbol{\alpha}_p)}\] sending a ring homomorphism $u: \Z_{p^2}/(p^n) \rightarrow \mathrm{End}(E[p^n]_{\overline{k}})$ to the component indexed by the projection of $u$ modulo the unit component of $\underline{\mathrm{End}}_k(E[p^n])$. The action of $\Z/2\!\Z$ satisfies:
\begin{itemize}[noitemsep,label=\tiny$\bullet$]
    \item it acts on the index set by pre-composing with Frobenius, so that it has no fixed point on $\pi_0(\cC_{\ns}(p^n)_{E/k})$.  
    \item it \emph{reverses} the $\underline{\mathrm{End}}(\boldsymbol{\alpha}_p)$-action: for $\lambda \in \underline{\mathrm{End}}(\boldsymbol{\alpha}_p)(\overline{k})$ and $u \in \cC_{\ns}(p^n)_{E/k}(\overline{k})$, one has 
\[\overline{1} \cdot (\lambda \star u) = (-\lambda) \star (\overline{1}\cdot u).\] 
\end{itemize}
In particular, $\cC_{\ns}^+(p^n)_{E/k}$ is isomorphic to the disjoint union of $p^{2n-2}$ copies of $\mathbb{A}^1_k$. 
\begin{proof}
This follows from \cref{prop:supersingular-endomorphisms-modp-smooth,prop:supersingular-endomorphisms-modp-components}. For the final claim, restrict $\rho^{-1}$ to a subset of components corresponding to a set of representatives for morphisms $\Z_{p^2}/(p^n) \rightarrow R_n(E)$ modulo pre-composition by Frobenius, and use \cref{prop:nonsplit-plus-basic}. For the component count, one can check that $R_n(E)^{\times}$ acts transitively on its subrings $R \simeq \Z_{p^2}/(p^n)$ and that the stabiliser of a given $R$ is $R^{\times}$, and then apply \cite[Lemma 26.6.7]{Voi21} to the preimage of $R$ in $\OO_E$.
\end{proof}
\end{prop}

\begin{corollary}
\label{cor:smoothness-level-modp}
Let $k$ be a field of characteristic $p$ and $\cL$ a finite \'etale representable moduli problem over $k$. Then the moduli problem $\cC_{\ns}(p^n)\times \cL$ (resp. $\cC_{\ns}^+(p^n)\times \cL$) is representable by a smooth affine scheme of relative dimension one. If $k$ is algebraically closed, this scheme is the disjoint union of copies of $\mathbb{A}^1_k$ indexed by the $k$-isomorphism classes of $(E,u,z)$ (resp. $(E,R,k)$), where $E/k$ is supersingular, $u: \Z_{p^2}/(p^n) \rightarrow R_n(E)$ is a ring homomorphism (resp. $R \subset R_n(E)$ a subring isomorphic to $\Z_{p^2}/(p^n)$), and $z \in \cL(E/k)$. 

\begin{proof}
Since smoothness of relative dimension one can be tested fpqc-locally on the base, we may assume that $k$ is algebraically closed. The claim then follows from the previous results. 
\end{proof}

\end{corollary}

\begin{prop}
\label{prop:smoothness-level}
Let $R$ be any ring and $\cL$ be a finite \'etale representable moduli problem over $R$. Then the moduli problem $\cC_{\ns}(p^n)\times \cL$ (resp. $\cC_{\ns}^+(p^n)\times \cL$) is representable by a $R$-scheme which is affine and smooth of relative dimension one over $R$.  

\begin{proof}

By \cite[(4.12)]{KM85} we may assume that $\cL = [\Gamma_1(N)]$ over $R=\Z[1/N]$ for $N \geq 4$. By previous results and \cite[Prop. 1.3.4]{Stu25}, since smoothness can be tested fpqc-locally on the base, we may even assume that $\cL=[\Gamma_1(N)]$ over $R=W(\overline{\F}_p)$ for $N \geq 4$ prime to $p$. The fine moduli scheme $\cM$ attached to $\cC_{\ns}(p^n)\times \cL$ (resp. $\cC_{\ns}^+(p^n)\times \cL$) exists and is affine; we wish to show that it is smooth of relative dimension one over $R$. This holds for its generic fibre (the moduli problem is finite \'etale representable), which is irreducible (by considering the complex uniformization). It also holds for the special fibre by \cref{cor:smoothness-level-modp}. By \cite[Prop. 4.3.4]{Stu25}, it is enough to show that every connected component of the special fibre of $\cM$ contains the specialisation of some $R$-point of $\cM$.

By our description of the special fibre of $\cM$ and since $\cL$ is \'etale, it is enough to show the following: for every supersingular elliptic curve $E_0/\overline{\F}_p$ and every ring homomorphism $u: \Z_{p^2}/(p^n) \rightarrow R_n(E_0)$, there exists an elliptic curve $E/R$, an isomorphism $g: E_0 \rightarrow E_{\overline{\F}_p}$ such that 
\[
\mathrm{End}(E) \ \overset{g}{\lra} \ \mathrm{End}(E_0) \ \lra \ R_n(E_0)
\]
has the same image as $u$. This can be shown by a direct adaptation of \cite[Proposition 2.7]{GZ85}.
\end{proof}
\end{prop}

\subsection{Smoothness of the coarse moduli scheme}
\label{subsec:smoothness}

We now turn to removing the auxiliary level structure, and study the coarse moduli scheme associated to $\cC_{\ns}(p^n)$ and $\cC_{\ns}^+(p^n)$. 

\begin{lemma}
    \label{lem:action-by-auto}
Let $E/k$ be a complete supersingular elliptic curve, $\phi \in \mathrm{Aut}(E/k) \backslash \{ \pm 1\}$, and define
\[
Y := \cC_{\ns}(p^n)_{E/k} \qquad \qquad Y^+ := \cC_{\ns}^+(p^n)_{E/k}.
\]
Then $\phi$ has finitely many fixed points in $Y(\overline{k})$ and $Y^+(\overline{k})$. Furthermore, every element of $\pi_0(Y^+\!/\overline{k})$, resp. $Y^+(\overline{k})$, that is fixed by $\phi$ is the image of an element of $\pi_0(Y/\overline{k})$, resp. $Y(\overline{k})$, fixed by $\phi$. 
\begin{proof}
Fix a $\Z_p$-basis $(1,z)$ of $\Z_{p^2}$, then $Y$ is endowed with an action of $\underline{\mathrm{End}}(\boldsymbol{\alpha}_p)$, and the connected components of $Y$ and $Y^+$ are geometrically connected, see \cref{prop:description-ccns}. 

Let $\phi_0 \in k$ be the action of $\phi$ on $\mathrm{Lie}(E/k)$. Since $\phi\phi^{\vee}=1$ it follows from \eqref{eqn:polyfac} that $\phi_0^{p+1} = 1$. Let $u: \Z_{p^2}/(p^n) \rightarrow R_n(E)$ be a ring homomorphism such that $\phi u\phi^{-1} = u \circ \mathrm{Frob}^r$ with $r \in \{0,1\}$. Since $\OO_E/\mathfrak{P}_E$ is commutative, $u=\phi u\phi^{-1}$ modulo $\mathfrak{P}_E$, and in particular $u(z-\mathrm{Frob}^r(z)) \in \mathfrak{P}_E$. Hence $z-\mathrm{Frob}^r(z) \notin \Z_{p^2}^{\times}$, thus $r=0$. Hence, if $x \in \pi_0(Y^+)$ is fixed by $\phi$, then $x$ is the image of $y \in \pi_0(Y)$ fixed by $\phi$. Likewise, if $x \in Y^+(\overline{k})$ is fixed by $\phi$, then it is the image of some $y \in Y(\overline{k})$ fixed by $\phi$. 

Let $u: \Z_{p^2}/(p^n) \rightarrow \OO_E/(p^n) \subset \mathrm{End}(E[p^n])$ be a ring homomorphism that commutes with $\phi$ modulo $p^{n-1}\mathfrak{P}_E$, so the connected component of $u$ in $Y$ is stable under $\phi$. For any $\lambda \in \mathrm{End}(\boldsymbol{\alpha}_p/\overline{k})$ we have
\[
\phi \cdot (\lambda \star u) = \phi_0^2\lambda \star \phi u\phi^{-1}.
\]
If $p$ is inert in the order $\Z[\phi]$, then $\phi_0^2 \neq 1$, so $\phi$ has exactly one fixed point in the connected component of $u$ in $Y$.  Otherwise, either $p=2$ and $\phi$ has order $4$, or $p=3$ and $\phi$ has order $3$ or $6$. In either case $\phi_0^2=1$, so $\phi$ acts on the connected component of $u$ as a translation. In particular, if $\phi u\phi^{-1} \neq u$, then $\phi$ has no fixed point on the connected component of $u$. Suppose $\phi u\phi^{-1} = u$, then a calculation in $\OO_E/(p)$ shows that $u(z)-c$ is nilpotent for some $c \in \F_p$. This is impossible since $(1,z)$ is a $\Z_p$-basis of $\Z_{p^2}$. 
\end{proof}
\end{lemma}

The following results conclude the proof of \cref{thm:A}.
Let us say that a moduli problem $\cL$ over a ring $R$ is \emph{symmetric} if, for every elliptic curve $E/S/R$, the automorphism $-1$ of $E$ acts trivially on $\cL(E/R)$. Note that $\cC_{\ns}(p^n)$ and $\cC_{\ns}^+(p^n)$ are symmetric for any $n \geq 1$, as is $[\Gamma(N)]/\Gamma$ for any $\Gamma \leq \mathrm{GL}_2(\Z/N\Z)$ containing $\pm I_2$. Any finite product of symmetric moduli problems is also symmetric.

\begin{prop}
\label{prop:smoothness-coarse}
Let $R$ be a regular ring, $\cL$ a finite \'etale symmetric relatively representable moduli problem over $R$, and $\cP$ the moduli problem $\cC_{\ns}(p^n)$ or $\cC_{\ns}^+(p^n)$ over $R$. Then $\cP \times \cL$ has a coarse moduli scheme $\mathrm{M}(\cP \times \cL)$ over $R$, which is smooth of relative dimension $1$ and satisfies coarse base change to regular rings.
\begin{proof}

The coarse moduli scheme $M=\mathrm{M}(\cP \times \cL)$ exists by \cite[(8.1.1)]{KM85}, since $\cP \times \cL$ is relatively representable and affine. To check the other claims, we may work Zariski-locally on $R$ and assume that some integer $N \geq 3$ is invertible on $R$. Then the moduli scheme $\cM := \cM(\cP \times \cL \times [\Gamma(N)])$ is a smooth affine $R$-scheme of relative dimension one by \cref{prop:smoothness-level}. It carries an action of the group $G := \mathrm{GL}_2(\Z/N\Z)/\{\pm \mathrm{id}\}$ because $\cL$ is symmetric, and one has $M=\cM/G$. 

The smoothness of $M$ follows from the first theorem in the Notes of Chapters 8 and 10 (p. 508) of \cite{KM85}. To check coarse base change along morphisms of regular rings, we wish to apply the theorem on p. 510 of \cite{KM85}: one has to check that, for every non-unit $g \in G$, every geometric fibre of $\cM \rightarrow \operatorname{Spec}{R}$ has finitely many fixed points under $g$.

It is enough to show that, if $k$ is an algebraically closed field, there are finitely many isomorphism classes of $(E,\alpha)$ (where $E/k$ is an elliptic curve and $\alpha \in \cP(E/k)$) such that $E$ admits an automorphism $j$ (which is not $\pm \mathrm{id}$) preserving $\alpha$. If $p$ is invertible in $k$, $\cP$ is finite \'etale and $\mathrm{Aut}(E)^{\times}=\{\pm 1\}$ unless $j(E) \in \{0,1728\}$, so this is clear. Otherwise, this follows from \cref{lem:action-by-auto}. 
\end{proof}
\end{prop}

\begin{corollary}
\label{cor:special-fibre-of-coarse}

Let $R$ be a DVR with residue field $k$ of characteristic $p$. Let $\cP$ be the moduli problem $\cC_{\ns}(p^n)$, resp. $\cC_{\ns}^+(p^n)$, and $\cL$ a finite \'etale symmetric relatively representable moduli problem over $R$. The geometric special fibre of the coarse moduli scheme $\mathrm{M}(\cP \times \cL)$ is the disjoint union of copies of $\mathbb{A}^1$ indexed by the isomorphism classes of $(E,u,\alpha)$, resp. $(E,A,\alpha)$, where $E/\overline{k}$ is a supersingular elliptic curve, $u: \Z_{p^2}/(p^n) \rightarrow R_n(E)$ is a ring homomorphism, resp. $A \subset R_n(E)$ is a subring isomorphic to $\Z_{p^2}/(p^n)$, and $\alpha \in \cL(E/\overline{k})$.

\begin{proof}
By \cref{prop:smoothness-coarse}, the special fibre (resp. geometric special fibre) of $\mathrm{M}(\cP \times \cL)$ is the coarse moduli scheme attached to the base-changed moduli problem $\cP \times \cL$ over $k$ (resp. $\overline{k}$) and it is smooth of relative dimension one. We may thus assume that $R=k$. 

By \cref{cor:factors-through-supersingular-locus} and \cite[Lem. 8.1.3.1]{KM85}, the set-theoretic image of the $j$-invariant $\mathrm{M}(\cP \times \cL) \rightarrow \mathbb{A}^1_k$ is contained in the finite set $\cS$ of supersingular $j$-invariants; $\mathrm{M}(\cP \times \cL)$ is geometrically reduced, so the scheme-theoretic image of $j$ is the reduced closed subscheme of $\mathbb{A}^1_k$ attached to $\cS$. By \cref{cor:factors-through-supersingular-locus} and \cite[Cor. 12.4.4]{KM85}, if $E/\overline{k}$ is supersingular with universal formal deformation $\cE/\overline{k}\llbracket t \rrbracket$, then $(\cP \times \cL)_{\cE/\overline{k}\llbracket t \rrbracket}$ is isomorphic over $\overline{k}\llbracket t\rrbracket$ to $(\cP \times \cL)_{E/\overline{k}}$, where $\overline{k}$ is viewed as $\overline{k}\llbracket t\rrbracket /(t)$, compatible with the action of $\mathrm{Aut}(E)$. Therefore, by \cite[Prop 8.2.3]{KM85}, the classifying map induces an isomorphism 
\[
\coprod_{\substack{E/\overline{k}\\\alpha \in \cL(E/\overline{k})'}}{\frac{\cP_{E/\overline{k}}}{\mathrm{Aut}(E,\alpha)/\{\pm \mathrm{id}\}}} \ \ \stackrel{\sim}{\lra} \ \  \mathrm{M}(\cP \times \cL)_{\overline{k}},
\] 
where $\cL(E/\overline{k})'$ denotes a system of representatives for $\cL(E/\overline{k})$ modulo $\mathrm{Aut}(E)$. Each $\cP_{E/\overline{k}}$ is a union of copies of $\mathbb{A}^1_{\overline{k}}$ indexed by morphisms $\Z_{p^2}/(p^n) \rightarrow R_n(E)$ (resp. subrings of $R_n(E)$ isomorphic to $\Z_{p^2}/(p^n)$), and this identification is compatible with $\mathrm{Aut}(E)$. The result follows, since over any field the quotient of $\mathbb{A}^1$ by a finite group is isomorphic to $\mathbb{A}^1$, see for instance \cite[Lemma 3.2.1]{KM77}.
\end{proof}
\end{corollary}

\begin{remark}\label{rmk:component count}
    The number of components of $\cX_{\ns}^+(p^n)_{\overline{\F}_p}$ can be found by considering, for each supersingular elliptic curve $E/\overline{\F}_p$, the action of $\Aut(E)$ on the $p^{2n-2}$ components of $\cC_{\ns}^+(p^n)_{E/k}$ (\cref{prop:description-ccns}).
\end{remark}

\begin{corollary}
\label{cor:special-fibre-of-xnsp}
The coarse moduli scheme $\cX_{\ns}^+(p)$ attached to the moduli problem $\cC_{\ns}^+(p)$ over $\Z$ exists. It is smooth affine, and has a $j$-invariant map to $\mathbb{A}^1_{\Z}$. The fibre $\cX_{\ns}^+(p)_{\F_p}$ is the disjoint union of the $j^{-1}(s) \simeq \mathbb{A}^1_{\kappa(s)}$, where $s \in \mathbb{A}^1_{\F_p}$ runs over the supersingular points. 
\begin{proof}

The $j$-invariant $j: \cX_{\ns}^+(p)_{\F_p} \rightarrow \mathbb{A}^1_{\F_p}$ has scheme-theoretic image equal to the reduced subscheme of the supersingular locus (because the source is smooth). We need to show that for every supersingular $s \in \mathbb{A}^1_{\F_p}$, the fibre $X_s := j^{-1}(s)$ is isomorphic to $\mathbb{A}^1_{\kappa(s)}$. By \cref{cor:special-fibre-of-coarse} this holds geometrically, so the result follows since all forms of $\mathbb{A}^1_k$ over a perfect field $k$ are trivial \cite[Lemma 1.1]{Rus70}. 
\end{proof}
\end{corollary}

\subsection{Moduli scheme for mixed Cartan structures}
\label{subsec:mixed-moduli}

We now introduce mixed Cartan problems and the associated moduli schemes. They are defined prime by prime, so all that remains to do is introduce the moduli problem attached to the normaliser of a split Cartan subgroup. 

\begin{definition}
    For any prime power $p^n$, define the moduli problems $\cC_{\spl}(p^n)$ and $\cC_{\spl}^+(p^n)$ over $\Z$ as the quotient of $[\Gamma(p^n)]_{\Z}$ by the subgroups $\Gamma_{\spl}(p^n)$ and $\Gamma_{\spl}^+(p^n)$ of diagonal matrices, resp. diagonal or anti-diagonal matrices, in $\mathrm{GL}_2(\Z/p^n\Z)$. Given two coprime positive integers $N_{\spl}, N_{\ns}$, we define
        \[
        \cC(N_{\spl},N_{\ns}) := \prod_{p^k \parallel N_{\spl}}{\!\!\cC_{\spl}(p^k)} \times \prod_{p^k \parallel N_{\ns}}{\!\!\cC_{\ns}(p^k)},\qquad \cC^+(N_{\spl},N_{\ns}) := \prod_{p^k \parallel N_{\spl}}{\!\!\cC_{\spl}^+(p^k)} \times \prod_{p^k \parallel N_{\ns}}{\!\!\cC_{\ns}^+(p^k)}\]
\end{definition}

\begin{prop}
\label{prop:mixed-cartan-coarse}
    The moduli problems $\cC(N_{\spl},N_{\ns})$ and $\cC^+(N_{\spl},N_{\ns})$ are relatively representable, affine, smooth above $\Z[1/N_{\spl}]$, finite above $\Z[1/N_{\ns}]$, and admit coarse moduli schemes 
    \[
    \cX_{\Car}(N_{\spl},N_{\ns}), \qquad  \cX_{\Car}^+(N_{\spl},N_{\ns})
    \]
    over $\Z$. These schemes are affine, normal, of finite type and flat of relative dimension one over $\Z$. They are smooth over $\Z[1/N_{\spl}]$ and their formation commutes with base change to regular $\Z[1/N_{\spl}]$-algebras. 
    
    \begin{proof}
        By \cite[Theorem 5.1.1, Theorem 7.1.3]{KM85}, one sees that $\cC_{\spl}(p^n), \cC_{\spl}^+(p^n)$ are relatively representable, symmetric, finite and normal. They are \'etale above $\Z[1/p]$. The statement follows from our previous results on non-split Cartan moduli problems and results of \cite{KM85}, namely Theorem 5.1.1, Theorem 7.1.3, \S 8.1 and Proposition 8.2.2.  
    \end{proof}
\end{prop}

\begin{remark}
    The curves $\cX_{\Car}(N_{\spl},N_{\ns})_{\Q}$ and $\cX_{\Car}^+(N_{\spl},N_{\ns})_{\Q}$ are the non-cuspidal loci of the Cartan modular curves described in \cite[\S 3]{DLM22}. They are respectively attached to Cartan and Cartan-plus subgroups of $\mathrm{GL}_2(\Z/N_{\spl}N_{\ns}\!\Z)$ in the terminology of \emph{loc.cit.}.  
\end{remark}

\begin{definition}
    Let $N \geq 1$ be an integer and $A$ be an \'etale free $\Z/N\!\Z$-algebra of rank two. Let $N_{\spl}$ be the product of the primes $p \mid N$ at which $A$ splits. Given an elliptic curve $E/S/\Z[1/N_{\spl}]$, we say that a ring homomorphism 
    \[
    f: A \ \lra \ \mathrm{End}_S(E[N])
    \]
    is \emph{regular} if for every $a \in A \backslash \{0\}$ and every $s \in S$, the endomorphism $f(a)_s$ of $E_s[N]$ is non-zero. 
    For any elliptic curve $E/S/\Z[1/N_{\spl}]$, let $\cP_A(E/S)$ be the set of regular homomorphisms $A \rightarrow \mathrm{End}_S(E[N])$. The group $\mathrm{Aut}(A)$ acts (on the right) by pre-composition on the moduli problem $\cP_A$ over $\Z[1/N]$. 
\end{definition}

We also need a variant of \cref{lemma:recovers-classical-nonsplit} for the mixed Cartan moduli problems. 

\begin{lemma}
    \label{lem:recovers-natural-split}
    Let $p$ be a prime and $n \geq 1$. The moduli problem $\cP := \cP_{(\Z/p^n\Z)^{\oplus 2}}$ is relatively representable and finite \'etale. The morphism of moduli problems 
    $\pi: [\Gamma(p^n)] \rightarrow \cP$ defined by \[(P,Q) \in [\Gamma(p^n)](E/S) \mapsto \left[(z_1,z_2) \in (\Z/p^n\Z)^{\oplus 2} \mapsto \left(\begin{pmatrix}P\\Q\end{pmatrix} \mapsto \begin{pmatrix}z_1P\\z_2Q\end{pmatrix}\right)\right] \in \cP(E/S) \] is well-defined and factors through an isomorphism $\cC_{\spl}(p^n) \simeq \cP$. Pre-composing with the involution 
    \[
    (\Z/p^n\!\Z)^{\oplus 2} \ \lra \ \ (\Z/p^n\!\Z)^{\oplus 2} : (x,y) \longmapsto (y,x),
    \]
    defines an action of $\Z/2\!\Z$ on $\cP$. The quotient $\cP^+ := \cP/(\Z/2\!\Z)$ is well-defined over $\Z[1/p]$, relatively representable and finite \'etale. Composing $\pi$ with $\cP \rightarrow \cP^+$ induces an isomorphism $\cC_{\spl}^+(p^n) \simeq \cP^+$.   
    \begin{proof}
        To show the relative representability and that $\cP_{(\Z/p^n\Z)^{\oplus 2}}$ is finite \'etale, it is enough to show by \'etale descent 
        that if $E$ is an elliptic curve over some $\Z[1/p]$-scheme $S$ and $E[p^n]$ is a constant $S$-scheme, then $\cP_{E/S}$ exists and is a finite \'etale $S$-scheme, which is a direct verification. The rest is proved in a similar way to \cref{lemma:recovers-classical-nonsplit} and \cref{prop:nonsplit-plus-basic}.
    \end{proof}
\end{lemma}

Let $N_{\ns},N_{\spl}$ be two coprime positive integers and $N=N_{\ns} \cdot N_{\spl}$.
Let $A$ be the quadratic \'etale $\Z/N\!\Z$-algebra that is split at prime divisors of $N_{\spl}$, and nonsplit at prime divisors of $N_{\ns}$, i.e. 
\begin{equation}\label{eq:quadratic_algebra}
    A = A(N_{\spl},N_{\ns}) := \Bigl( \prod_{p^k \parallel  N_{\rm spl}} (\Z/p^k\Z)^2 \ \Bigr) \times \Bigl( \prod_{p^k \parallel N_{\rm ns}} \Z_{p^2}/p^k\Z_{p^2}\ \Bigr) .
\end{equation}

\begin{corollary}\label{cor:PA moduli problem}
The moduli problem $\cP_A$ is relatively representable, affine of finite presentation, and naturally isomorphic to $\cC(N_{\spl},N_{\ns})_{\Z[1/N_{\spl}]}$. The group $(\Z/2\Z)^t$ acts on $\cP_A$, generated by pairwise-commuting involutions $\sigma_p$ for each $p\mid N$, where $\sigma_p$ acts by the Frobenius involution on $\Z_{p^2}/(p^k)$ if $p\mid N_{\ns}$, and by swapping the elements of the idempotent basis of $(\Z/p^k\Z)^2$ if $p\mid N_{\spl}$. The quotient moduli problem $\cP_A^+:=\cP_A/(\Z/2\Z)^t$ exists, is relatively representable, affine of finite presentation, and is isomorphic to $\cC^+(N_{\spl},N_{\ns})_{\Z[1/N_{\spl}]}$. 

\begin{proof}
    This follows from prime factorization \cite[Propositions 1.8.2, 3.5.1]{KM85} and the following simpler variant of Prop. 7.3.1 of \emph{op.cit.} (which does not apply since $\cC_{\ns}(p^n)$ or $\cC_{\ns}^+(p^n)$ are not flat). Let $\cP_1, \cP_2$ be two moduli problems over $\underline{\mathbf{Ell}}_{\Z[1/N_{\spl}]}$ that are relatively representable affine and $G_1,G_2$ be finite groups acting \emph{freely} on $\cP_1,\cP_2$ respectively. Then the natural map 
    \[
    (\cP_1 \times \cP_2)/(G_1 \times G_2) \ \lra \ (\cP_1/G_1) \times (\cP_2/G_2)
    \]
    is an isomorphism. Given the construction of the quotient moduli problem (cf. \cite[pp. 190--191]{KM85}), one reduces this claim to its commutative algebra analogue, which follows from \cite[Cor. A7.1.2]{KM85}. 
\end{proof}

\end{corollary}

\subsection{Minimal regular compactifications of $\cX_{\ns}^+(p)$} 
\label{subsec:EP}

Edixhoven--Parent \cite{EP24} construct a minimal regular model $X_{\mathrm{EP}}(p)$ for the modular curve $X_{\ns}^+(p)$ over $W := W(\overline{\F}_p)$ when $p \geq 11$. We will now compare the coarse moduli scheme $\cX_{\ns}^+(p)$ over $W$ with $X_{\mathrm{EP}}(p)$. There is an obvious difference between the two schemes: $X_{\mathrm{EP}}(p)$ is proper and has non-smooth points over $W$, whereas $\cX_{\ns}^+(p)$ is affine and smooth over $W$. We show that this is essentially the only difference: $\cX_{\ns}^+(p)$ is identified with the smooth locus of $X_{\mathrm{EP}}(p)$. For $p<11$ the modular curve $X_{\ns}^+(p)$ has genus $0$ so there is no unique minimal regular model. In these cases, we instead exhibit $\cX_{\ns}^+(p)$ as an open subscheme of $\mathbb{P}^1_{\Z}$.

The following lemma is well-known to experts, similar arguments appear for instance in Zywina \cite[Prop. 1.13]{Zyw15}, Le Fourn--Lemos \cite[Appendix B]{LFL21}, as well as the recent works of Furio--Lombardo \cite[Chapter 3]{FL25}, Bisatt--Furio--Lombardo \cite[Theorem 1.3]{BFL26} and Rebolledo--Wuthrich \cite[Prop. 4]{RW25}. 

\begin{lemma}
    Let $K/\Q_p$ be a finite extension with ramification index $e < p-1$ and let $E/K$ be an elliptic curve such that $\cC_{\ns}(p)(E/K)$ is not empty. Then $E$ has potentially good reduction. If $E$ has good reduction, then this reduction is supersingular and $\cC_{\ns}(p)(E/K)$ contains exactly two elements. 

\begin{proof}
    Because $\cC_{\ns}(p)(E/K) \neq \emptyset$, the image of 
    \[
    \rho_{E,p} : G_K := \mathrm{Gal}(\overline{K}/K) \ \lra \ \mathrm{Aut}(E[p](\overline{K})) \simeq \mathrm{GL}_2(\F_p)
    \]
    is contained in a non-split Cartan subgroup. Suppose $j(E) \notin \cO_K$, then by Tate uniformization the mod $p$ cyclotomic character is trivial on the inertia subgroup of $K$, which contradicts $e < p-1$.

    Assume $\cE/\cO_K$ is an elliptic curve with $\mathcal{E}_K \simeq E$. Then $\cC_{\ns}(p)(\cE/\cO_K) \neq \emptyset$ by Raynaud \cite[(3.3.3), (3.3.6)]{Ray74}, so the Hasse invariant of $\cE_{\cO_K/(p)}$ vanishes by \cref{lem:hasse-vanishes}. Serre \cite[Prop. 10(b)]{Ser72} showed that the inertia subgroup of $G_K$ has image $eC$ under $\rho_{E,p}$, where $C \leq \mathrm{GL}_2(\F_p)$ is a non-split Cartan subgroup. Since $e < p-1$, this image is non-scalar, so $\cC_{\ns}(p)(E/K)$ contains one $\Z/2\!\Z$-orbit.
\end{proof}
\end{lemma}

We now show that for $p\geq 11$, $\cX_{\ns}^+(p)$ satisfies a weak form of the N\'eron model property. 

\begin{prop}
\label{prop:moduli-scheme-is-weak-neron}

Let $p \geq 11$ be prime and $K/\Q_p$ be finite unramified. The $j$-map $\cX_{\ns}^+(p)(K) \rightarrow \cO_K$ is well-defined and injective, and the obvious map induces a bijection
\[
\cX_{\ns}^+(p)(\cO_K) \ \stackrel{\sim}{\lra} \ \cX_{\ns}^+(p)(K).
\]

\begin{proof}

We will show that every $P \in X_{\ns}^+(p)(K)$ comes from the image under the classifying map of some $(E,u)$ with $u \in \cC_{\ns}(p)(\cE_L/L)$, where $L/K$ has ramification index $e \leq 6$ and $\cE/\cO_L$ is an elliptic curve. By Raynaud's theorem \cite[(3.3.3),(3.3.6)]{Ray74}, $\cC_{\ns}(p)(\cE_L/L) \simeq \cC_{\ns}(p)(\cE/\cO_L)$, so $P$ extends to $\cX_{\ns}^+(p)(\cO_L)$. Since $\cX_{\ns}^+(p)$ is affine, $P$ extends to $\cX_{\ns}^+(p)(\cO_L \cap K) = \cX_{\ns}^+(p)(\cO_K)$ and we are done. 

Let $E/K$ be an elliptic curve with $j(E) \neq 0,1728$. Then the set of $z \in X_{\ns}^+(p)(\overline{K})$ with $j(z)=j(E)$ identifies with $\cC_{\ns}^+(p)(E/\overline{K})$ as $\mathrm{Gal}(\overline{K}/K)$-sets. Let $P,Q \in X_{\ns}^+(p)(\overline{K})$ be such that $j(P)=j(Q)=j(E)$. Let $K'/K$ be the compositum of the quadratic extensions of $K$ (which has ramification index $2$), then $P,Q$ correspond to $u,v \in \cC_{\ns}(p)(E_{K'}/K')$. In particular, $E_{K'}$ has potentially good reduction. Therefore, there exists $L_1/K$ with ramification index in $\{1,2,3,4,6\}$ such that $E_{L_1}$ has good reduction. Let $L=L_1K$, it has ramification index $e \in \{2,4,6\}$. Since $\cC_{\ns}(p)(E_{K'/K}) \subset \cC_{\ns}(p)(E_L/L)$, $u$ and $v$ are in the same $\Z/2\!\Z$-orbit so $P=Q$. 

Let $E/K$ be an elliptic curve with good reduction such that $\mathrm{Aut}(E)$ has $2w$ elements with $w \in \{2,3\}$. Then the set of $z \in X_{\ns}^+(p)(\overline{K})$ such that $j(z) = j(E)$ identifies with $\cC_{\ns}^+(p)(\overline{K})/\mathrm{Aut}(E)$ as a $G_K$-set. 

Let $P \in X_{\ns}^+(p)(K)$ with $j(P)=j(E)$. Then $P$ corresponds to $u \in \cC_{\ns}(p)(E_L/L)$ for some extension $L/K$ with degree at most $2w < p-1$. Hence $E$ has supersingular reduction and $p$ is inert in the quadratic ring $\mathrm{End}(E)$. Therefore, $\cC_{\ns}(p)(E/K)$ contains a morphism $u_0: \F_{p^2} \simeq \mathrm{End}(E)/(p) \rightarrow \mathrm{End}(E[p])$, so $u$ is in the same $\Z/2\!\Z$-orbit as $u_0$, so $P$ is the corresponding Heegner point, which clearly extends. 
\end{proof}
\end{prop}

\begin{prop}\label{prop:model}
    Let $p$ be a prime number. There exists an open immersion \[
    \cX_{\ns}^+(p) \ \lra \ X
    \]
    where $X$ is a (relatively) minimal model over $\Z$ of $X_{\ns}^+(p)_{\Q}$. When $p \geq 11$, the image of this open immersion is the smooth, non-cuspidal locus of $X$. When $p \leq 7$, there is an abstract isomorphism $X\simeq \mathbb{P}^1_{\Z}$. 

    \begin{proof}
Let $Y = \cX_{\ns}^+(p)$ for simplicity. Since $Y$ is affine, $Y$ is an open subscheme of some proper $\Z$-scheme $X$. Replacing $X$ with the scheme-theoretic image of $Y$ in $X$, we may assume that $X$ is integral, flat over $\Z$, and of dimension $2$ (cf. \cite[Proposition 13.2.10, Remarque 13.2.12]{EGAIV3}). Replacing $X$ with its normalization (possible by \cite[Scholie 7.8.3]{EGAIV2} and since $Y$ is normal), we may assume that $X$ is normal. Since $Y$ is regular, by Lipman's theorem \cite[Theorem 1.1]{Art86}, we may assume that $X$ is regular. In conclusion, there is an open immersion $Y \rightarrow X$ such that $X$ is regular connected of dimension $2$ and proper flat over $\Z$: in particular, the geometric fibres of $X \rightarrow \operatorname{Spec}(\Z)$ are connected. 

Since $X_{\Q}$ is normal projective over $\Q$, it is isomorphic to $X_{\ns}^+(p)_{\Q}$. The results of \cite[\S~10]{KM85} imply that $X_{\ns}^+(p)_{\Q}$ extends to a smooth proper curve $X^{\rm KM}$ over $\Z[1/p]$ with geometrically connected fibres. The isomorphism $X_{\Q} \simeq X_{\ns}^+(p)_{\Q}$ extends over $\Z[1/Np]$ for some $N \geq 1$ prime to $p$ by \cite[Th\'eor\`eme 8.10.5]{EGAIV3}, and we may glue $X_{\Z[1/N]}$ to $X^{\rm KM}_{\Z[1/p]}$: this scheme is regular, connected, flat, proper over $\Z$ of relative dimension $1$, hence projective \cite[Thm. 8.3.16]{QL}. In summary, we have an open immersion 
\[
Y \ \lra \ X
\]
with $X$ projective flat over $\Z$, isomorphic to $X^{\rm  KM}$ over $\Z[1/p]$, regular connected of dimension $2$. \vspace{.15cm}

\par Now assume that $X_{\F_p}$ has the smallest possible number of irreducible components. We will show that $X$ is relatively minimal. The irreducible components of $X_{\F_p}$ are of two kinds: the \emph{modular} components, which contain the image of some connected component on $Y_{\F_p}$, and the other components, which we call \emph{exotic}. If $C \subset X_{\F_p}$ is a modular component, then the modular locus of $C$ is an open subspace of $X_{\F_p}$ isomorphic as a subscheme to $\mathbb{A}^1_{\kappa}$ for some extension $\kappa/\F_p$ of degree at most two. Hence, $C$ has multiplicity one, and $C$ contains exactly one point outside the modular locus; moreover, an irreducible component $D \neq C \subset X_{\F_p}$ can only meet $C$ at that point, since the modular locus is smooth in $X_{\F_p}$. \vspace{.15cm}

Suppose that $X$ is not relatively minimal and let $E \subset X_{\F_p}$ be an exceptional divisor. Considering the contraction of $E$, it is clear by our assumption on $X$ that $E$ is a modular component with modular locus $\mathbb{A}^1_{\kappa}$, so $E \simeq \PP^1_{\kappa}$. In particular, if $x \in E$ is the point outside the modular locus, $x$ has residue field $\kappa$.  \vspace{.15cm}

\begin{itemize}
\item Assume $p \leq 7$. Then $Y_{\F_p} \simeq \mathbb{A}^1_{\F_p}$, so there is exactly one exceptional divisor $E$ on $X_{\F_p}$ which is also the only modular component, and $\kappa=\F_p$. Since $E$ is exceptional, one has 
\[
E^2 = K_{X/\Z} \cdot E = -1,
\]
where $K_{X/\Z}$ is a canonical divisor on $X$ (cf. \cite[Def. 9.1.34]{QL}). For any other irreducible component $D$ of $X_{\F_p}$, $D$ is not exceptional, so $K_{X/\Z} \cdot D \geq 0$. This contradicts the genus formula \cite[Prop. 9.1.35]{QL}. Hence $X$ is relatively minimal and it special fibre contains a component of multiplicity one; by the genus formula, $X_{\F_p}$ is irreducible. Since $Y(\Q) \neq \emptyset$, $X \rightarrow \operatorname{Spec}(\Z)$ has geometrically integral fibres and $X_{\Q} \simeq \PP^1_{\Q}$ by \cite[Prop. 7.4.1]{QL}. Therefore $X \simeq \PP^1_{\Z}$ by \cite[Ex. 8.3.5, 8.3.6]{QL}. \vspace{.15cm}

\item Assume $p \geq 11$. Then $X_{\ns}^+(p)$ has genus $g \geq 1$, and its cusps are defined over $\Q(\zeta_p)^+$. By \cref{prop:moduli-scheme-is-weak-neron} $Y_{\F_p}$ is the smooth locus of $X_{\F_p}$: indeed, any smooth point of $X_{\F_p}$ is the specialisation of a non-cuspidal point of $X(K)$ with $K/\Q_p$ finite unramified, hence in the image of $Y(\cO_K)$.

In particular, every exotic component of $X_{\F_p}$ has multiplicity at least two. By Castelnuovo's criterion, there exists a unique component $D \subset X_{\F_p}$ meeting $E$; furthermore, $D$ meets $E$ transversally at $x$, so $D$ is regular at $x$ by  \cite[Proposition 9.1.8]{QL} and $D$ has multiplicity one. In particular, $D$ is modular and its non-modular point is $x$, which is regular, so $D$ is regular, irreducible, proper over $\F_p$, and its modular locus is $\mathbb{A}^1_{\kappa'}$, so $D=\mathbb{P}^1_{\kappa'}$ and $\kappa'=\kappa=\kappa(x)$. 

Let $F \subset X_{\F_p}$ be an irreducible component distinct from $E$ or $D$. Then $F$ does not meet $(E \cup D) \backslash \{x\}$ (because it is the reunion of the modular loci of $E$ and $F$), and $x \notin F$ because otherwise $E$ would meet $F$, which is impossible since $E$ is exceptional. Therefore, $E \cup D \subset X_{\F_p}$ is closed and open, so $X_{\F_p} = E\cup D$. Since $E$ and $D$ are regular, have multiplicity one and intersect transversally, the adjunction and genus formulae imply that $g < 1$, a contradiction.

Since $g \geq 1$, a relatively minimal model is minimal, and the formation of the regular minimal model commutes with finite \'etale base change or completion \cite[Proposition 9.3.28]{QL}. \qedhere 
\end{itemize}
    \end{proof}

\end{prop}

\begin{remark}
    When $p \geq 11$, \cref{prop:model} and \cite[Theorem 1.1]{LT16} imply together that the N\'eron model of $X_{\ns}^+(p)$ over $\Z$ is obtained by glueing $\cX_{\ns}^+(p)$ and the compactification $X^{\mathrm{KM}}_{\Z[1/p]}$ of $\cX_{\ns}^+(p)_{\Z[1/p]}$. 
\end{remark}

\begin{remark}
If $\cL$ is a finite \'etale representable moduli problem over $\Z_p$, one can prove a similar minimality result for the fine moduli scheme $\cM := \cM(\cC_{\ns}^+(p),\cL)$ when $p > 3$, if for instance $\cM_{\Q_p}$ is geometrically connected and its compactification has positive genus. 

When $p \leq 3$, \cref{prop:moduli-scheme-is-weak-neron} often fails; in such cases, $\cM$ cannot be the smooth locus of any proper regular model. Indeed, if $\cL$ is a classical moduli problem, $\cM$ admits $K$-points coming from elliptic curves with split multiplicative reduction. Such points have non-integral $j$-invariant, so they do not extend to $\cO_K$.
\end{remark}

\begin{remark}
    Let $p\geq 11$. By the same proof as \cref{prop:moduli-scheme-is-weak-neron}, one can show that if $K/\Q_p$ is finite unramified and $n \geq 1$, the obvious map $\cX_{\ns}^+(p^n)(\cO_K) \rightarrow \cX_{\ns}^+(p^n)(K)$ is also a bijection. One can then follow the proof of \cref{prop:model} to show that $\cX_{\ns}^+(p^n)$ identifies with the smooth non-cuspidal locus of the minimal regular model of $X_{\ns}^+(p^n)$.  
\end{remark}

\begin{remark}
    As a consequence of this interpretation for $X_{\mathrm{EP}}^{\mathrm{sm}}(p)$, it is possible to deduce from \cite[Corollary 3.11]{EP24} a completely explicit description for the component group of the N\'eron model of the Jacobian of $X_{\ns}^+(p)$ along with its action of the Hecke algebra. Namely, this component group identifies as a module over the Hecke algebra with the quotient \[\frac{(\mathrm{Div}(\cS) \otimes \Z/(p-1)\Z)^{\deg=0}}{\Z/(p-1)\Z \cdot u},\] where $\cS$ is the set of supersingular $j$-invariants in $\overline{\F_p}$, and $u = \sum_{s \in \cS}{\frac{12}{w_s}[s]}$, where $2w_s$ is the size of the automorphism group of the elliptic curve with $j$-invariant $s$. 
\end{remark}

\section{Arithmetic intersections on mixed Cartan curves}
\label{sec:GZ} 
This section determines the arithmetic intersection number of a pair of Heegner divisors associated to coprime fundamental discriminants $\Delta_1,\Delta_2<0$ on the arithmetic surface of \cref{prop:mixed-cartan-coarse}
\[
\cX := \cX_{\Car}^+(N_{\spl},N_{\ns}).
\] 
\Cref{thm:B} was stated in the introduction for $\cX_{\ns}^+(p^n)$, but the arguments apply equally to arbitrary mixed Cartan curves and \cref{thm:B} will be proved in that generality in \cref{subsec:Heegner}. The strategy follows Gross--Zagier \cite{GZ85} and Gross--Kohnen--Zagier \cite{GKZ87}, reducing the intersection calculations to a counting problem for orders in quaternion algebras. We define mixed Cartan orders in \cref{subsec:mixed-cartan-orders}. These need not be Eichler, and we give local descriptions using the classification of Bass orders.

\textbf{Notation.} Let $B$ be a quaternion algebra over $\Q$. We write $\mathrm{disc}(B)$ for the product of ramified primes of $B$. If $S\subseteq B$ is an order, we write $\mathrm{disc}(S)$ to refer to the reduced discriminant of $S$ \cite[Definition 15.4.4]{Voi21}. The \emph{index} of $S$ will refer to the index $[R:S]$ of $S$ in any maximal order $R\subseteq B$ containing $S$; equivalently, the index of $S$ is equal to $\mathrm{disc}(S)/\mathrm{disc}(B)$.

\subsection{Residually unramified orders} We first recall the classification of residually unramified quaternion orders due to Brzezinski \cite[\S~1]{B90}. For the sake of completeness, we include arguments for statements that are not immediately extracted from Brzenzinski \cite{B90} or Voight \cite[Chapter 24]{Voi21}. 

Let $S$ be an order in $B$, and for each prime $p$ let $S_p :=S\otimes\Z_p$. Note that $\Z_{p^2} := W(\F_{p^2})$ and $\Z_p\times\Z_p$ are the only two unramified quadratic $\Z_p$-algebras up to isomorphism.

\begin{definition}
    \label{def:residually}
	Let $p\mid\mathrm{disc}(S)$. 
    \begin{itemize}
        \item We say $S$ is \emph{residually split at $p$} if $S_p$ contains a $\Z_p$-subalgebra $\cO\simeq \Z_p\times\Z_p$.
        \item We say $S$ is \emph{residually inert at $p$} if $S_p$ contains a $\Z_p$-subalgebra $\cO\simeq \Z_{p^2}$.
        \item We say $S$ is \emph{residually ramified at $p$} if every quadratic $\Z_p$-subalgebra of $S_p$ is ramified over $\Z_p$.
    \end{itemize}
	If $S$ is either residually split or residually inert at all $p\mid\mathrm{disc}(S)$ we say $S$ is \emph{residually unramified}. 
\end{definition} 

The local order $S_p$ contains an embedding of both $\Z_p\times\Z_p$ and $\Z_{p^2}$ if and only if $S_p\simeq M_2(\Z_p)$. So if $p\mid\mathrm{disc}(S)$ then exactly one of the three cases above must hold. 

\begin{remark}
    The standard definitions of residually split, inert, and ramified are expressed in terms of the isomorphism type of $S_p/J_p$, where $J_p$ is the Jacobson radical of $S_p$; see for instance \cite[Definition 24.3.2]{Voi21}. These definitions are equivalent to those given in \cref{def:residually}. In one direction, if $S_p/J_p$ is isomorphic to a quadratic $\F_p$-algebra, let $\alpha\in S_p$ be the preimage of a generator under $S_p\to S_p/J_p$. Then $\Z_p[\alpha]$ is an unramified $\Z_p$-subalgebra of $S_p$, isomorphic to either $\Z_{p^2}$ or $\Z_p\times\Z_p$ accordingly as $S_p/J_p\simeq \F_{p^2}$ or $\F_p\times \F_p$. Conversely, for any unramified quadratic $\Z_p$-subalgebra $\cO\subseteq S_p$, the map $\cO/p\cO\to S_p/J_p$ is an injection because $\cO/p\cO$ has no nonzero nilpotents.
\end{remark}

Residually unramified orders have a simple local classification, summarised in \cite[\S~24.5--6]{Voi21} for $p \neq 2$ but discussed in greater generality in \cite[\S~1]{B90}, from which the following statement may be deduced. 

\begin{lemma}
    \label{lem:resid_unram_classification}
	Let $p \mid \mathrm{disc}(S)$, and set $n:=\mathrm{ord}_p(\mathrm{disc}(S))$. Let $\cO\subseteq S_p$ be a quadratic $\Z_p$-subalgebra.
	\begin{enumerate}[label=(\alph*)]      
        \item If $\cO\simeq \Z_p\times\Z_p$ then $S_p$ is isomorphic to an Eichler order of level $p^n$ in $M_2(\Z_p)$.
        \item If $\cO\simeq\Z_{p^2}$ then there is a maximal order $R_p\subseteq B_p$ such that $S_p = \cO \ + \ p^{\lfloor n/2\rfloor}R_p$. 
    \end{enumerate}
    
\begin{proof}  
    Part (a) follows from \cite[Lemma 24.3.6]{Voi21}, so we focus on part (b). We are done if $S_p$ is maximal, so assume otherwise. Let $R_p$ denote the hereditary closure of $S_p$, so that $R_p$ is maximal and $p^n=\mathrm{disc}(S_p)=p^{2r}\mathrm{disc}(R_p)$ for some integer $r$ by \cite[24.4.9]{Voi21}. Note that $S_p$ is Bass \cite[Proposition 24.5.4]{Voi21} and non-hereditary, so we may apply \cite[Proposition 1.12(a)]{B90} to write $S_p$ as $\cO$ plus a power of the Jacobson radical of $R_p$. If $R_p\simeq M_2(\Z_p)$ (so $n=2r$) then the Jacobson radical of $R_p$ is $pR_p$, so $S_p=\cO+p^{n/2}R_p$. If instead $R_p$ is the maximal order in a division algebra (so $n=2r+1$), then its Jacobson radical is the unique maximal ideal $\mathfrak{P}$ of $R_p$, 
    and we have $S_p=\cO+\mathfrak{P}^{n-1}$. Since $\mathfrak{P}^2=pR_p$, 
    we may rewrite this as $S_p=\cO+p^{(n-1)/2}R_p$.
\end{proof}
\end{lemma}

In what follows, we will also need the classification of superorders of residually inert orders, which follows directly from either \cite[Proposition 24.4.7(b)]{Voi21} or \cite[\S~1]{B90}. 
\begin{lemma}
    \label{lem:onlysuperorders}
    Let $R_p\subseteq B_p$ be a maximal order, $\cO\subseteq R_p$ a subalgebra with $\cO\simeq\Z_{p^2}$, and $S_p=\cO+p^kR_p$. Then every superorder $S_p' \supset S_p$ in $B_p$ is of the form 
    \[S_p' = \cO+p^iR_p, \qquad \mbox{for } \ 0\leq i\leq k.\]
\end{lemma}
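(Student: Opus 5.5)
Let $S'_p\supseteq S_p$ be an order in $B_p$. My plan is to show that $S'_p$ is residually inert, hence of the shape in \cref{lem:resid_unram_classification}(b) relative to the \emph{same} $\cO$ and $R_p$, and then compare discriminants.

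\textbf{Step 1: a maximal order $R'_p \supseteq S'_p$.} Every order lies in a maximal order, so pick one, $R'_p$. We have $S'_p\supseteq \cO$ with $\cO\simeq\Z_{p^2}$. If $S'_p$ is maximal we are almost done; otherwise $p\mid \mathrm{disc}(S'_p)$, and $S'_p$ is residually inert by \cref{def:residually}, since it contains $\cO$.

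\textbf{Step 2: uniqueness of $R_p$.} If $B_p$ is a division algebra, its maximal order is unique, so $R'_p=R_p$. This already forces $S'_p\subseteq R_p$. In the split case $B_p\simeq M_2(\Z_p)$, I would use the fact that maximal orders are the endomorphism rings of lattices $\Lambda\subset \Q_p^2$ up to scaling. A maximal order contains $\cO\simeq\Z_{p^2}$ exactly when $\Lambda$ is an $\cO$-module. Since $\cO$ is a DVR whose uniformiser is $p$, such lattices are $\cO$-homothety classes, which are $\Q_p^\times$-homothety classes. So there is a unique maximal order containing $\cO$, namely $R_p$.

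\textbf{Step 3: locating $S'_p$ between $\cO$ and $R_p$.} Now $\cO\subseteq S'_p\subseteq R_p$ and $S'_p\supseteq p^kR_p$. Apply \cref{lem:resid_unram_classification}(b) to $S'_p$, or take $i=0$ if $S'_p$ is maximal. This gives $S'_p=\cO'+p^{j}R''_p$ for some maximal $R''_p$ and some $\cO'\simeq\Z_{p^2}$. To replace $(\cO',R''_p)$ by $(\cO,R_p)$, I would argue directly rather than quoting the lemma. Set
\[
M=S'_p/(\cO+p^kR_p)\subseteq R_p/(\cO+p^kR_p).
\]
The key point is that $R_p=\cO\oplus \cO w$ is a decomposition of left $\cO$-modules for a suitable $w$. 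In the split case $w$ can be chosen with $w\alpha=\bar\alpha w$ for $\alpha\in\cO$; in the division case $w$ is a uniformiser with this property. The ring $S'_p$ is a left $\cO$-module containing $\cO$, so $S'_p=\cO\oplus (S'_p\cap \cO w)$. Moreover $S'_p\cap\cO w$ is an $\cO$-submodule of $\cO w$, hence equal to $p^i\cO w$, or to $\mathfrak{P}^m\cap\cO w$ in the division case. Since $\cO+p^i\cO w=\cO+p^iR_p$, this gives the claimed form.

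\textbf{Main obstacle.} The hard part is the division case. There $S'_p\cap \cO w$ might a priori be an odd power of $\mathfrak P$ intersected with $\cO w$. Since $w$ is itself a uniformiser, $\cO w=\mathfrak{P}\cap(\cO w)$-type modules occur, and $\mathfrak{P}^{2i+1}=p^i\cO w+p^{i+1}\cO$. One then has to check that $\cO+\mathfrak{P}^{2i+1}=\cO+p^iR_p$; this holds because $p^iR_p=p^i\cO+p^i\cO w$. With that checked, $S'_p=\cO+p^iR_p$ with $0\le i\le k$ in both cases.
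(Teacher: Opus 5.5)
Your proof is correct, but it takes a different route from the paper. The paper gives no argument of its own and cites the local classification of residually inert (Bass) orders in Voight, Proposition 24.4.7(b), or Brzezinski, \S 1. You instead argue from scratch in two steps.

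First, you show $R_p$ is the unique maximal order containing $\cO$. This is trivial in the division case. In the split case, where $B_p\simeq M_2(\Q_p)$ rather than $M_2(\Z_p)$, it holds because the $\cO$-stable lattices in the one-dimensional $\Q_{p^2}$-space $\Q_p^2$ form a single homothety class. Hence $S_p'\subseteq R_p$. Second, the modular law gives $S_p'=\cO\oplus(S_p'\cap\cO w)$, and $S_p'\cap\cO w=p^i\cO w$ with $0\le i\le k$, since $\cO w\simeq\cO$ is free of rank one over the DVR $\cO$ and $p^k\cO w\subseteq S_p'$.

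Your approach gives an elementary argument, uniform in the split and division cases, which also proves directly the uniqueness of the maximal superorder used in \cref{lem:maximal order of mixed cartan}. The citation is shorter but rests on the hereditary-closure and Jacobson-radical theory of Bass orders.

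Some tidying is possible. Your ``main obstacle'' is not one: the $\cO$-submodules of $\cO w$ are exactly the $p^i\cO w$ in both cases. So the comparison with $\mathfrak{P}^m\cap\cO w$ can be deleted, even though your identity $\mathfrak{P}^{2i+1}=p^i\cO w+p^{i+1}\cO$ is right. The residual-inertness remark in Step 1 and the aborted appeal to \cref{lem:resid_unram_classification}(b) can go too.

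Conversely, the existence of the complement $\cO w$, which you only assert, deserves a line. The relation $w\alpha=\bar\alpha w$ is never used. It suffices that $R_p/\cO$ is torsion-free over $\cO$: if $p^nx\in\cO$ with $x\in R_p$, then $x\in R_p\cap\Q_{p^2}$, an order in $\Q_{p^2}$ containing the maximal order $\cO$, so $x\in\cO$. Hence $R_p/\cO$ is free of rank one over $\cO$, and any lift $w$ of a generator works.
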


\subsection{Mixed Cartan orders} 
\label{subsec:mixed-cartan-orders}
We define mixed Cartan orders in quaternion algebras over $\Q$. As before, we let $N_{\spl}$ and $N_{\ns}$ be a pair of coprime positive integers, and set $N := N_{\spl}\cdot N_{\ns}$. 

\begin{definition} \label{def:mixed cartan order}
A \emph{mixed Cartan order} $S$ is a residually unramified quaternion order such that the index of $S$ is a perfect square. The \emph{level} of $S$ is defined to be the positive integer $N$ such that $[R:S]=N^2$ for any maximal order $R\supseteq S$. We say $S$ has \emph{type} $(N_{\spl},N_{\ns})$, with $N_{\spl}\cdot N_{\ns}=N$, if 
\begin{itemize}
    \item $S$ is residually split at every prime dividing $N_{\spl}$,
    \item $S$ is residually inert at every prime dividing $N_{\ns}$. 
\end{itemize}
\end{definition}

Recall from \cref{eq:quadratic_algebra} that $A(N_{\spl},N_{\ns})$ was defined as the unramified quadratic $(\Z/N\!\Z)$-algebra split at primes dividing $N_{\spl}$ and inert at primes dividing $N_{\ns}$. 

\begin{lemma}
\label{lem:maximal order of mixed cartan}
    A quaternion order $S$ is a mixed Cartan order of type $(N_{\spl},N_{\ns})$ if and only if there exists a maximal order $R$ with $NR\subset S\subset R$ and $S/NR\simeq A(N_{\spl},N_{\ns})$. Such a maximal order $R$ is then unique.
    \begin{proof}
        Let $S$ be mixed Cartan of type $(N_{\spl},N_{\ns})$. For each prime $p$ we take $R_p$ to be the hereditary closure of $S_p$, which is maximal because the index of $S_p$ in any maximal order is a square. For $p^r\parallel N_{\ns}$ we have $S_p\simeq \Z_{p^2}+p^r R_p$, so that $S_p/p^rR_p\simeq \Z_{p^2}/p^r\Z_{p^2}$, and $R_p$ is the unique maximal order containing $S_p$ by \cref{lem:onlysuperorders}. For $p^r\parallel N_{\spl}$, the Eichler order $S_p$ is associated to a path of length $2r$ on the Bruhat--Tits tree, with each maximal superorder of $S_p$ associated to a vertex on this path \cite[23.5.16]{Voi21}. For $i=0,\ldots,2r$, the inclusion of $S_p$ into the $i^\mathrm{th}$ maximal order corresponds to the inclusion of the subring
        \[
        \begin{pmatrix}
            \Z_p & p^i\Z_p \\ p^{2r-i}\Z_p & \Z_p
        \end{pmatrix} \subset M_2(\Z_p).\]
        This subring only contains $p^r M_2(\Z_p)$ if $i=r$, the vertex associated to the hereditary closure. In this case it has the form $(\Z_p\times\Z_p)+p^rM_2(\Z_p)$, and so $S_p/p^rR_p\simeq (\Z/p^r\Z)^2$. The converse is clear.
    \end{proof}
\end{lemma}

\subsection{The Clifford order $S_t$} The intersection numbers of Heegner divisors on mixed Cartan curves is encoded algebraically in the pair of embeddings of endomorphism rings
\begin{equation}
\label{eqn:embs}
\cO_1,\cO_2 \hookrightarrow R \subset B_{q\infty}    
\end{equation}
of the associated elliptic curves, where the endomorphism ring $R$ of their common reduction modulo $q$ is a mixed Cartan order. We follow Gross--Kohnen--Zagier \cite{GKZ87} and introduce the Clifford orders $S_t$, which are the smallest orders containing the images of a pair of embeddings as in \cref{eqn:embs}. The main result of this subsection is \cref{prop:quat-count}, which counts the number of mixed Cartan superorders of $S_t$. 

Let $\Delta_1,\Delta_2<0$ be two negative discriminants, and let $t \in \Z$ satisfy $t\equiv \Delta_1\Delta_2\bmod 2$ and $t^2\neq\Delta_1\Delta_2$. The binary quadratic form $\Delta_1 X^2 + 2t XY + \Delta_2 Y^2$ has nonzero discriminant $4(t^2-\Delta_1\Delta_2)$, and so its Clifford algebra over $\Q$ is a quaternion algebra $B_t$, which may be presented as
\begin{equation}
	\label{eqn:cliff}
	B_t = \Q + \Q e_1 + \Q e_2 + \Q e_1e_2 \qquad \mbox{with} \  \left\{ 
	\begin{array}{l}
		e_1^2 = \Delta_1, \, e_2^2 = \Delta_2 \\
		e_1e_2 + e_2e_1 = 2t.
	\end{array}\right.
\end{equation}
The integral elements $\gamma_i := (\Delta_i + e_i)/2$ generate quadratic orders $\cO_i$ of discriminants $\Delta_i$, where $i=1,2$. Their product $\gamma_1\gamma_2$ has trace $(\Delta_1\Delta_2+t)/2$, which is an integer by our choice of $t$. Thus $\gamma_1$ and $\gamma_2$ generate a $\Z$-order $S_t$ in the quaternion algebra $B_t$,
\begin{equation}
	\label{eqn:Sx}
	S_t := \Z + \Z\gamma_1 + \Z\gamma_2 + \Z \gamma_1\gamma_2, \qquad \mbox{with} \ \mathrm{disc}(S_t) = \frac{t^2 - \Delta_1\Delta_2}{4}. 
\end{equation}
By construction, the order $S_t$ is generated as a ring by the image of the embeddings of the quadratic orders $\cO_1$ and $\cO_2$, and every such quaternion order arises in this way for an appropriate value of $t$.

\par Now suppose $\Delta_1,\Delta_2$ are coprime fundamental discriminants. As in the introduction, for any $p$ dividing $\mathrm{disc}(S_t)$, we define $\epsilon(p)$ to be the value of the genus character $\chi$ of $\Q(\sqrt{\Delta_1\Delta_2})$ at any prime above $p$.
\begin{lemma}
    Let $\Delta_1,\Delta_2<0$ be a pair of coprime fundamental discriminants. Then the Clifford order $S_t$ is residually unramified. More precisely, at any prime $p \mid \mathrm{disc}(S_t)$ we have 
    \begin{itemize}
        \item $S_t$ is residually split if and only if $\epsilon(p)=+1$,
        \item $S_t$ is residually inert if and only if $\epsilon(p)=-1$.
    \end{itemize}
\begin{proof}
    Since $\Delta_1,\Delta_2$ are coprime, we can pick $i=1$ or $2$ so that $\cO := \cO_i$ is unramified at $p$. Then $\cO\otimes \Z_p$ is isomorphic to $\Z_p\times \Z_p$ if $\epsilon(p)=1$, and is isomorphic to $\Z_{p^2}$ if instead $\epsilon(p)=-1$. 
\end{proof}
\end{lemma}

\par For the proof of \cref{thm:B} in the next section, we need to count the number of mixed Cartan orders of level $N$ containing $S_t$. Note that $\mathrm{disc}(S_t)$ must be divisible by $N^2$ for this set to be non-empty. 

\begin{definition}
    Given $t\in\Z$ with $t\equiv \Delta_1\Delta_2\bmod 2$ and $t^2\neq \Delta_1\Delta_2$, we define $\mathrm{Car}_t(N_{\spl},N_{\ns})$ to be the set of mixed Cartan orders $R$ of type $(N_{\spl},N_{\ns})$ (cf.~\cref{def:mixed cartan order}) with $S_t\subset R \subset S_t\otimes\Q$.
\end{definition}

\begin{prop}
\label{prop:quat-count}
Assume $\Delta_1,\Delta_2 < 0$ are two Heegner discriminants for $\mathrm{Car}(N_{\spl},N_{\ns})$. Let $S_t$ be the quaternion order defined by \cref{eqn:Sx}, and assume that $N^2 \mid \mathrm{disc}(S_t)$. Then 
\[
|\mathrm{Car}_t(N_{\spl},N_{\ns}) | = \prod_{\substack{p \,\mid \, \mathrm{disc}(S_t)\\ \epsilon(p) = 1}} \left( 1 + \mathrm{ord}_p\left(\frac{\Delta_1\Delta_2 - t^2}{4N^2}\right)\!\right) 
\]
\begin{proof}
It suffices to count the number of mixed Cartan superorders locally. When $p \nmid \mathrm{disc}(S_t)$ the order $S_t$ is maximal at $p$, so the number is one. When $p\mid \mathrm{disc}(S_t)$ the local structure of 
\[
S_{t,p}:=S_t\otimes\Z_p
\]
is determined in \cref{lem:resid_unram_classification}. We distinguish three cases. \vspace{.1cm}
\begin{itemize}
\item Suppose $p \mid N_{\spl}$. Then we must have $\epsilon(p) = +1$, so $S_{t,p}$ is an Eichler order of level $p^n$ where $n=\mathrm{ord}_p(\mathrm{disc}(S_t))$. If $p^k\parallel N_{\spl}$, then locally at $p$, a mixed Cartan order is Eichler of level $p^{2k}$. Since Eichler orders of level $p^n$ correspond to paths of length $n+1$ in the Bruhat--Tits tree \cite[23.5.16]{Voi21}, the number of Eichler orders of level $p^{2k}$ containing an Eichler order of level $p^n$ is
\[
n+1-2k=1+\mathrm{ord}_p(\mathrm{disc}(S_t)/N^2).
\]

\item Suppose $p \mid N_{\ns}$. Then we must have $\epsilon(p)=-1$, so writing $\mathrm{disc}(S_{t,p})=p^n$, we can conclude that $S_{t,p}=\cO+p^{\lfloor n/2\rfloor}R_p$ for some maximal order $R_p$ and quadratic subalgebra $\cO\simeq\Z_{p^2}$. If $p^k\parallel N_{\ns}$, then $\cO+p^kR_p$ is a mixed Cartan order of the correct type at $p$, and it is in fact the unique superorder of $S_{t,p}$ with the correct index, by \cref{lem:onlysuperorders}.\vspace{.1cm}
\item Suppose $p \nmid N$. Then a mixed Cartan order of level $N$ is maximal at $p$. If $\epsilon(p)=+1$ then as before the local order $S_{t,p}$ is Eichler and its number of maximal superorders equals
\[
1 + \mathrm{ord}_p(\mathrm{disc}(S_t))= 1 + \mathrm{ord}_p(\mathrm{disc}(S_t)/N^2).
\]
If $\epsilon(p)=-1$ then there is a unique maximal order containing $S_{t,p}$ by \cref{lem:onlysuperorders}.
\end{itemize}
The statement of the proposition follows by taking the product of all these local counts. 
\end{proof}
\end{prop}

\subsection{Intersection numbers of Heegner divisors}\label{subsec:Heegner} 
Let $\cX:=\cX_{\Car}^+(N_{\spl},N_{\ns})$ as in \cref{prop:mixed-cartan-coarse}.
Consider a 
Heegner discriminant $\Delta < 0$ for $\cX$ as in \cref{eqn:Heegner-disc}, and let $\cO$ be the quadratic order of discriminant $\Delta$. For any invertible ideal $\fa$ of $\cO$ we have a complex elliptic curve $E_{\fa} := \C/\fa$ with endomorphism ring $\cO$, with a canonical injection 
\[
\cO\! / N\!\cO \ \hookrightarrow \ \mathrm{End}_{\C}(E_{\fa}[N]). 
\]
The ring $\cO\! / N\!\cO$ may be identified with the quadratic \'{e}tale algebra $A := A(N_{\spl},N_{\ns})$ of \cref{eq:quadratic_algebra}. 
This identification is canonical up to automorphisms, giving a well-defined point $Q_{\fa}$ on $\cX(\C)$ that depends only on the class of $\fa$ in $\mathrm{Pic}(\cO)$; we call such $Q_{\fa}$ a \emph{Heegner point}. By the theory of complex multiplication, $Q_{\fa}$ is defined over a number field of degree $h(\cO)$ inside the ring class field of $\cO$, and the Heegner divisor 
\[
P_{\Delta} := \sum_{[\fa] \in \mathrm{Pic}(\cO)} Q_{\fa} \quad \in \mathrm{Div} \, \cX(\Q)
\] 
is defined over $\Q$. There exists a number field $L$ over which $E_{\fa}$ has all its endomorphisms defined, and has good reduction at all finite places. We obtain a point $Q_{\fa} \in \cX(\cO_{L})$, and $P_{\Delta}$ is defined as a Cartier divisor, finite flat over $\Z$, so that the arithmetic intersection of two Heegner divisors on $\cX$ is defined.

\begin{remark}
    A technical complication is that the above description of the Heegner divisor uses the moduli problem $\cP_A^+$, which is only defined over $\Z[1/N_{\ns}]$. To resolve this, note that $Q_{\fa}\in \cX(L)$ comes from a $\cO_L[1/N_{\spl}]$-point $C_{\fa}$ of $Y := \cC^+(N_{\spl},N_{\ns})_{E_{\cO_L}/\cO_L}$ using the isomorphism of \cref{cor:PA moduli problem}. Since $Y$ is finite above $\cO_L[1/N_{\ns}]$ by construction, the valuative criterion of properness ensures that $C_{\fa}$ extends to an $\cO_L$-point of $Y$, and therefore $Q_{\fa}$ extends to a point in $\cX(\cO_L)$. Let $\fq \subset \cO_L$ be a prime dividing $N_{\spl}$, so $Q_{\fa}$ sends $\fq$ to a closed point $x \in \cX$. There is not much one can say about the level structure determined by $x$, but all we will need is the fact that the underlying elliptic curve at $x$ is the reduction of $E_{\fa}$ mod $\fq$.     
\end{remark}

\par Now consider a pair of Heegner points $Q_1,Q_2$ with 
Heegner discriminants $\Delta_1,\Delta_2<0$, and let $\mathbf{E}_1,\mathbf{E}_2$ be a pair of associated elliptic curves $E_1,E_2$ enriched with the Cartan-plus level structure 
induced by the obvious maps $\cO_i \to\End(E_i[N])$. 
Let $q$ be a prime, and let $\mathfrak{q}$ be a prime above $q$ in the compositum $H$ of the two ring class fields of discriminants $\Delta_1,\Delta_2$. By \cite[Lemma 2.4]{Con04} the points $Q_1,Q_2$ are defined over $W$, the integers of the completion of the maximal unramified extension of $H_{\mathfrak{q}}$, and we may assume that $E_1,E_2$ have good reduction over $W$ with all their geometric endomorphisms defined over $W$. 
For the remainder of this section $\overline{\F}_q$ refers to the residue field of $W$ (which is indeed an algebraic closure of $\F_q$).

\begin{lemma}\label{lem:int equals isoms}
Suppose $q\nmid N_{\spl}$, and that $Q_1,Q_2$ have the same reduction $\overline{Q}_1 = \overline{Q}_2$ in $\cX(\overline{\F}_q)$. Let $x\in \mathrm{Max}\cX_W$ be the associated closed point. Then
    \begin{align}
    \mathrm{length}\left(\frac{\cO_{\cX_W,x}}{(Q_1,Q_2)} \right) =  \frac{1}{2} \sum_{n \geq 1} |\mathrm{Isom}_{W/\mathfrak{q}^nW}(\bE_1,\bE_2)|. 
\end{align}

\begin{proof}
Since $q\nmid N_{\spl}$, $\cX_W$ is a smooth quotient of a smooth fine moduli space by \cref{prop:mixed-cartan-coarse}. So this follows from the arguments of Gross--Zagier \cite[Prop. 6.1]{GZ86} or Conrad \cite[Thm. 4.1]{Con04}. 
\end{proof}
\end{lemma}

    For the remainder of \cref{subsec:Heegner} we assume that $\Delta_1,\Delta_2$ are coprime and fundamental. If the Heegner points $Q_1,Q_2$ have the same reduction in $\cX(\overline{\F}_q)$, then using the $j$-invariant map $\cX\to \mathbb{P}^1$, we can conclude $(E_1)_{\overline{\F}_q}\simeq (E_2)_{\overline{\F}_q}$.    
    Then $\mathrm{End}_{\overline{\F}_q}(E_1)$ contains copies of $\cO_1$ and $\cO_2$, so $(E_i)_{\overline{\F}_q}$ is supersingular. Hence $q$ splits in neither $\cO_i$, and in particular $q \nmid N_{\spl}$. Now fix generators $u_i = (\Delta_i+\sqrt{\Delta_i})/2$ of $\cO_i$. Let
    \[
    j: (\mathbf{E}_1)_{\overline{\F}_q} \ \stackrel{\sim}{\lra} \ (\mathbf{E}_2)_{\overline{\F}_q}
    \]
    be an isomorphism of elliptic curves enriched with $\cP_A^+$-structure. By construction, we have a ring homomorphism $\cO_1 \otimes \cO_2 \rightarrow W$. Using the morphisms $\mathrm{End}(E_i) \rightarrow W$ given by the Lie action, we may canonically identify $\cO_i$ and $\mathrm{End}(E_i)$. Let $u'_2 = j^{-1}u_2j \in \mathrm{End}(E_1)$, so that 
    \[u'_2 \in R_1 := \cO_1+N\mathrm{End}_{\overline{\F}_q}(E_1).\]  Define $t \in \Z$ by $\mathrm{Tr}(u_1u'_2) =(\Delta_1\Delta_2+t)/2$. The rule $(\gamma_1,\gamma_2) \mapsto (u_1,u'_2)$ defines a morphism $\phi$ from the Clifford order $S_t$ of \cref{eqn:Sx} to $R_1$. In particular, $(t^2-\Delta_1\Delta_2)/4$ is negative and divisible by $\mathrm{disc}(R_1)=N^2q$. Now $R := (\phi \otimes \Q)^{-1}(R_1)$ is a mixed Cartan order in $S_t \otimes \Q$ of type $(N_{\spl},N_{\ns})$, to which the maximal order attached by \cref{lem:maximal order of mixed cartan} is 
    \[
    \OO(R) := (\phi \otimes \Q)^{-1}(\End_{\overline{\F}_q}(E_1)).
    \]
    Finally, $\OO(R)$ is oriented by the map $\omega: z \in \OO(R) \mapsto \mathrm{Lie}(\phi(z)) \in \overline{\F}_q$. Thus we have defined a map 
\begin{equation}
\Psi_W\colon \!\! \left\{\left(E_1,\, E_2,\, j\colon \!(\mathbf{E}_1)_{\overline{\F}_q} \!\overset{\sim}{\rightarrow} (\mathbf{E}_2)_{\overline{\F}_q}\right) \right\} \longrightarrow \left\{(t,R,\omega) : \small \!\!\begin{array}{ll} S_t \otimes \Q \simeq B_{q\infty}, & \!\!\! R \in \mathrm{Car}_t(N_{\spl},N_{\ns}),\\ \omega: \OO(R) \to \overline{\F}_q, & \!\!\! \omega(\gamma_i) = u_i \bmod{\mathfrak{q}}\end{array}\!\! \right\},
\end{equation}
where the $E_i$ run over the isomorphism classes of elliptic curves over $W$ with endomorphism ring $\cO_i$. 

\begin{lemma}
\label{lem:psiW-fibres}
The map $\Psi_{W}$ is surjective and its fibres have cardinality $\frac{1}{2}\cdot |\cO_1^{\times}| \cdot |\cO_2^{\times}|$. 

\begin{proof}
    Let $(t,R,\omega)$ be in the range of $\Psi_W$ with $\omega: \OO \rightarrow \overline{\F}_q$. The oriented order $(\OO,\omega)$ corresponds to a supersingular elliptic curve $E_0/\overline{\F}_q$ (cf. \cite[Proposition 42.4.8]{Voi21}). The two couples $(E_i,u_i)$ lifting $(E_0,\phi(\gamma_i))$ are then constructed by \cite[Prop. 2.7]{GZ85}. 
    
    If $(E_1,E_2,j)$ is in the domain of $\Psi_W$ and $\alpha_i \in \Aut(E_i)$, then $\Psi_W(E_1,E_2,\alpha_2j\alpha_1) = \Psi_W(E_1,E_2,j)$. Therefore, we are reduced to showing that if $(t,R,\omega) = \Psi_W(E_1,E_2,j)=\Psi_W(E'_1,E'_2,j')$, there are isomorphisms $f_i: E_i \rightarrow E'_i$ such that $j' f_1 \in \mathrm{Aut}(E'_2)f_2 j \mathrm{Aut}(E_1)$. Indeed, we have an isomorphism $(E_1)_{\overline{\F}_q} \simeq (E'_1)_{\overline{\F}_q}$ because they have the same oriented endomorphism rings (cf. \cite[Prop. 42.4.8]{Voi21}), and the conclusion follows by using carefully the uniqueness in \cite[Prop 2.7]{GZ85}. 
\end{proof}
\end{lemma}

\begin{lemma}
\label{lem:int-mult-psiW}
Let $n\geq 1$. If $(t,R,\omega) = \Psi_W(E_1,E_2,j)$, then $j$ lifts to an isomorphism $(\mathbf{E}_1)_{W/\mathfrak{q}^n} \rightarrow (\mathbf{E}_2)_{W/\mathfrak{q}^n}$ if and only if $q^{2n-1} \mid \frac{\Delta_1\Delta_2-t^2}{4N^2}$. Such a lift is always unique. 

\begin{proof}
    The uniqueness is well-known (e.g. \cite[Th. 2.4.2]{KM85}). If $q \mid \Delta_1\Delta_2$, then $(E_1)_{W/\mathfrak{q}^2} \not\simeq (E_2)_{W/\mathfrak{q}^2}$ because their $q$-divisible groups are not isomorphic by \cite[Proposition 3.3]{Gro86}, so both conditions are equivalent to $n \leq 1$. Hence we may assume that $q \nmid \Delta_1\Delta_2$, so $\mathfrak{q}W=qW$.  
    
    Assume that $j$ lifts to $j': (\mathbf{E}_1)_{W/\mathfrak{q}^n} \rightarrow (\mathbf{E}_2)_{W/\mathfrak{q}^n}$. Then by Serre--Tate \cite[Th. 2.9.1]{KM85} and \cite[Prop. 3.3]{Gro86} one has 
    \[j^{-1}u_2j \in \cO_1+N\End_{W/\mathfrak{q}^nW}(E_1) = \cO_1+Nq^{n-1}\End_{\overline{\F}_q}(E_1),\] so \[N^2q^{2n-1} = \left|\mathrm{disc}\left[\cO_1+Nq^{n-1}\End_{\overline{\F}_q}(E_1)\right]\right| \mid |\mathrm{disc}(S_t)|=\frac{\Delta_1\Delta_2-t^2}{4}.\]

    Conversely, assume that $N^2q^{2n-1} \mid (\Delta_1\Delta_2-t^2)/4$. Let $R_1 = \cO_1+N\End_{\overline{\F}_q}(E_1)$. By considering the $\cO_1 \otimes \Z_q$-module structure, one sees that 
    \[\phi(S_t) \otimes \Z_q = (\cO_1 \otimes \Z_q)+q^t(R_1 \otimes \Z_q)\] with $t \geq n-1$. Therefore, 
    \[j^{-1}u_2j \in \cO_1+q^{n-1}N\End_{\overline{\F}_q}(E_1),\] so by Serre--Tate and \cite[Prop. 3.3]{Gro86} $j^{-1}u_2j$ lifts to some $u'_2 \in \End_{W/\mathfrak{q}^nW}(E_1)$. By \cite[Prop. 2.7]{GZ85}, $((E_1)_{W/\mathfrak{q}^n},u'_2)$ lifts to some $(E'_2/W,v_2)$ where $\mathrm{Lie}(v_2)=u_2$. Since $(E'_2/W,v_2)$ and $(E_2/W,u_2)$ are lifts of $((E_1)_{\overline{\F}_q},j^{-1}u_2j)$, they are isomorphic, so $(E_2/W,j^{-1}u_2j)$ lifts $((E_1)_{W/\mathfrak{q}^n},u'_2)$, which implies that $j$ lifts to $W/\mathfrak{q}^n$. 
\end{proof}
\end{lemma}

As a corollary of the above results, we obtain the main theorem of this section, which is a generalisation of \cref{thm:B} stated in the introduction to arbitrary mixed Cartan curves. 

\begin{thmx}\label{thm:C}
Let $\Delta_1,\Delta_2 < 0$ be a pair of coprime fundamental Heegner discriminants for the mixed Cartan curve $\cX$. Let $P_1$ and $P_2$ be their associated Heegner divisors, and $w_i := |\cO_i^{\times}|$. Then
\vspace{4pt}\begin{align}
\label{eqn:ThmCGZ}
\langle P_1, P_2 \rangle \ \ &= 
 \ \ \frac{w_1w_2}{8} \ \cdot \!\!\!\!\!\!\!\sum_{\substack{t \in \Z,\;d,d'>0 \\ t^2+4N^2dd'=\Delta_1\Delta_2}} \epsilon(d') \log(d).
\end{align}
\begin{remark}
    The Heegner condition forces $N$ to be coprime to $\Delta_1\Delta_2$, so if $N>1$ then there is no contribution from $t=0$. In this case the terms come in pairs $t,-t$ which produce the same value $m:=\frac{\Delta_1\Delta_2-t^2}{4N^2}$, from which \cref{thm:B} follows. For $N=1$, $t=0$ contributes if and only if $\{\Delta_1,\Delta_2\}=\{-4,-q\}$ for an odd prime $q$, in which case the summand is $\log(q)$; see e.g.~the first equation of \cite{GZ85}. 
\end{remark}
\begin{proof}
We first consider the left-hand side of \cref{eqn:ThmCGZ}, which by \cref{lem:int equals isoms} is
\begin{equation}\label{eqn:CGZ lhs 1}
\sum_{q \text{ prime}}{\frac{\log(q)}{2e_{W/\Q_q}}\sum_{\,\,E_1,E_2/W \phantom{\overset{\sim}{\rightarrow}}\!\!\!}{\sum_{j: (\mathbf{E}_1)_{\overline{\F}_q} \overset{\sim}{\rightarrow} (\mathbf{E}_2)_{\overline{\F}_q}}{m(j)}}},
\end{equation}
where $m(j)$ denotes the highest $n \geq 1$ such that $j$ lifts to an isomorphism of enriched elliptic curves over $W/\fq^n$. 
Here, the notations $W,\mathfrak{q}$ have the same meanings as above for any given prime $q$. By \cref{lem:psiW-fibres,lem:int-mult-psiW}, the sum \cref{eqn:CGZ lhs 1} can be rewritten as 
\begin{equation}\label{eqn:contrib-q}
\frac{w_1w_2}{8}\sum_{q\text{ prime}}\log(q)\!\!\!\sum_{\substack{t \in \Z\\ S_t \otimes \Q \, \simeq \, B_{q\infty}}}\!\!\!{\frac{1}{e_{W/\Q_q}}\left(1+\mathrm{ord}_q\left(\frac{\Delta_1\Delta_2-t^2}{4N^2}\right)\right)|\{(R,\omega) : (t,R,\omega) \in \mathrm{im}(\Psi_W)\}|}.
\end{equation}

We next consider the right-hand side of \cref{eqn:ThmCGZ}. For a fixed integer $t$, let $m:=\frac{\Delta_1\Delta_2-t^2}{4N^2}$; note that $t$ only contributes to the sum if $m$ is a positive integer.
As in \cite[p.192]{GZ85}, one has 
\[
\sum_{d,d'>0,\; dd'=m} \epsilon(d')\log(d)\;=\; \frac{1}{2}\left(\mathrm{ord}_q\left(m\right)+1 \right) \log(q)\ \!\cdot\!
\displaystyle\prod_{\substack{\epsilon(p) = 1\\p \mid m}} \left(\mathrm{ord}_p\left(m\right) + 1\right)\] if the set
\[\mathrm{diff}(m) := \{ p \mid m \mbox{ prime}: \epsilon(p) = -1,\; \mathrm{ord}_p(m) = \, \mbox{odd}\}\] is a singleton $\{q\}$, and the sum vanishes otherwise. 
Now for each prime $q$, we have $\mathrm{diff}(m) = \{q\}$ if and only if $S_t \otimes \Q$ is isomorphic to the quaternion algebra $B_{q\infty}$. 
Therefore, by \cref{prop:quat-count}, the right-hand side of \cref{eqn:ThmCGZ} can be rewritten as 
\begin{equation}\label{eqn:CGZ rhs 1}
\frac{w_1w_2}{8}\sum_{q \text{ prime}} \log(q) \!\!\!\sum_{\substack{t \in \Z,\\S_t \otimes \Q \simeq B_{q\infty}}} \!\!\frac{1}{2}\left(1+\mathrm{ord}_q\left(\frac{\Delta_1\Delta_2-t^2}{4N^2}\right)\right)|\mathrm{Car}_t(N_{\spl},N_{\ns})|,
\end{equation}
because $\mathrm{Car}_t(N_{\spl},N_{\ns})$ is empty if $N^2 \nmid \mathrm{disc}(S_t)$. 
Comparing \cref{eqn:contrib-q} to \cref{eqn:CGZ rhs 1}, we see that the theorem holds once we can show, for each prime $q$ and $t\in\Z$ with $S_{t}\otimes \Q\simeq B_{q\infty}$, that
\[\sum_{r\in\{t,-t\}}\frac{1}{e_{W/\Q_q}}|\{(R,\omega) : (r,R,\omega) \in \mathrm{im}(\Psi_W)\}|=\sum_{r\in\{t,-t\}}\frac12|\mathrm{Car}_r(N_{\spl},N_{\ns})|.\]

Assume that $q \mid \Delta_1$, so $q \nmid \Delta_2$ and $e_{W/\Q_q}=2$. Let $R \in \mathrm{Car}_t(N_{\spl},N_{\ns})$. Let $\OO$ be the maximal order associated to $R$ by \cref{lem:maximal order of mixed cartan}. The two ring homomorphisms $\omega: \OO \rightarrow W/\mathfrak{q}$ are conjugate under Frobenius. Both satisfy $\omega(\gamma_1)=u_1 \mod{\mathfrak{q}}$, and exactly one of them satisfies $\omega(\gamma_2) = u_2 \mod{\mathfrak{q}}$, and the conclusion follows. The same holds if $q \mid \Delta_2$.

Now assume $q \nmid \Delta_1\Delta_2$ and let $t \in \Z$ be such that $S_t \otimes \Q \simeq B_{q\infty}$. Let $J_t$ be the set of couples $(R,\omega)$ with $R \in \mathrm{Car}_t(N_{\spl},N_{\ns})$, and $\omega$ is a $W/\fq$-valued orientation of the maximal order attached to $R$ by \cref{lem:maximal order of mixed cartan} sending $\gamma_1$ to $u_1\mod{\fq}$. One has clearly $|J_t| = |\mathrm{Car}_t(N_{\spl},N_{\ns})|$. 

Let $I: S_t \rightarrow S_{-t}$ be the isomorphism respecting the first canonical generator $\gamma_1$, but sending the second canonical generator $\gamma_2$ of $S_t$ to the quadratic conjugate of the second canonical generator of $S_{-t}$. Then $I$ clearly induces a bijection $I: J_t \rightarrow J_{-t}$; furthermore, if $(R,\omega) \in J_t$ and its image under $I$ is $(R',\omega')$, then $(t,R,\omega) \in \mathrm{im}(\Psi_W)$ if and only if $(-t,R',\omega') \notin \mathrm{im}(\Psi_W)$. Thus one has 
\begin{align*}
    |\{(R,\omega) : (-t,R,\omega) \in \mathrm{im}(\Psi_W)\}| &= |\{(R,\omega) \in J_t: (t,R,\omega) \notin \mathrm{im}(\Psi_W)\}|,\text{ hence} \\
    \sum_{r \in \{\pm t\}} |\{(R,\omega): (r,R,\omega) \in \mathrm{im}(\Psi_W)\}| &= |J_t|= \sum_{r \in \{\pm t\}}{\frac{1}{2}|\mathrm{Car}_r(N_{\spl},N_{\ns})|},
\end{align*}
and we are done. \qedhere
\end{proof} 
\end{thmx}

\subsection{Non-fundamental, non-coprime discriminants}
\label{rmk:non-fundamental}
As was the case for counting maximal orders in Gross--Zagier \cite{GZ85} and Eichler orders in Gross--Kohnen--Zagier \cite{GKZ87}, the assumption that $\Delta_1$ and $\Delta_2$ are fundamental and coprime is crucial. For maximal orders, Lauter--Viray \cite{LV15} obtain formulae without this assumption. It is not our ambition to extend this in full to mixed Cartan orders, but we make note of a few simple generalizations here.

\begin{prop}\label{prop:non-fundamental}
    Let $\Delta_1,\Delta_2<0$ be coprime discriminants. Write $\Delta_1=f_1^2D_1$ and $\Delta_2=f_2^2D_2$ for fundamental discriminants $D_1$ and $D_2$. If every element of $\mathcal{S}(\Delta_1\Delta_2,N^2)$ is coprime to $f_1f_2$, then \cref{eqn:ThmCGZ} holds.
\begin{proof}
    The proof of \cref{thm:C} goes through with the following modifications. By the discussion following \cref{lem:int equals isoms}, if there is an arithmetic intersection at a prime $q$, then $q \mid m$ for some $m \in \cS(\Delta_1\Delta_2,N^2)$. Hence $q \nmid f_1f_2$, so that $e_{W/\Q_q} \leq 2$ (with equality iff $q \mid \Delta_1\Delta_2$). Hence \cite[Proposition 3.3]{Gro86} still applies. Moreover, in the proof that $\Psi_W$ is onto in \cref{lem:int-mult-psiW}, we need to check that the quasi-canonical lifts have CM by the order $\cO_i$ rather than a proper super-order. This is true because, for any super-order $R \supset S_t$, the embedding $\cO_i \hookrightarrow S_t \rightarrow R$ is optimal: indeed, it is optimal at every prime $r \nmid f_i$ because $(\cO_i)_r$ is normal, and optimal at every prime $r \nmid |\mathrm{disc}(S_t)| \in \cS(\Delta_1\Delta_2,N^2)$ because $\cO_i \hookrightarrow S_t$ is optimal. 
\end{proof}
\end{prop}

If $\Delta_1$ and $\Delta_2$ are not coprime then the situation is much more subtle, but for the classification of arithmetic intersections of CM points in $X_{\rm ns}^+(p)(\Q)$ in \cref{subsec:RW}, we can use the following weaker result. From now on we assume $p\geq 11$. By Lemma 6 and Proposition 10 of \cite{RW25} (for $\Delta\notin\{-3,-4\}$ and $\Delta\in \{-3,-4\}$, respectively), this implies that every point in $X_{\ns}^+(p)(\Q)$ with CM $j$-invariant is a Heegner point.

\begin{lemma}
\label{lem:no_intersect}
Let $\Delta_1,\Delta_2 < 0$ be discriminants and $q$ a prime. Suppose $\Delta_1/\Delta_2\neq q^{2r}$ for any $r \in \Z$. If their Heegner divisors on $\cX_{\ns}^+(p)$ intersect at $q$, then $\mathcal{S}(\Delta_1\Delta_2,1)$ contains an element divisible by $p^2q$.
\begin{proof}
If $P_1$ and $P_2$ intersect at $q$, then as in the discussion in \cref{subsec:Heegner}, this implies that there exists an elliptic curve $E/\overline{\F}_q$ and injections $\cO_1,\cO_2\hookrightarrow R\subseteq \End_{\overline{\F}_q}(E)$ for some mixed Cartan order $R$ of level $p$. If $\Delta_1/\Delta_2$ is not a square then $\cO_1$ and $\cO_2$ have non-isomorphic fraction fields. If $\Delta_1/\Delta_2$ is a square, then since we are assuming $\Delta_1/\Delta_2$ is not an even power of $q$, there exists a prime $\ell\neq q$ such that $\Delta_1$ and $\Delta_2$ have distinct $\ell$-adic valuations. The embeddings $\cO_1,\cO_2\hookrightarrow \End_{\overline{\F}_q}(E)$ are both optimal at $\ell$ \cite[Proposition 2.2]{LV15}, so they must land in distinct subfields. 

In either case we may conclude that $\cO_1$ and $\cO_2$ land in distinct subfields of $B_{q\infty}$, so they generate a quaternion order of the form $S_t$ for some value of $t$. Since $S_t$ is definite, we have $t^2 < \Delta_1\Delta_2$, and since $S_t$ is contained in a mixed Cartan order of level $p$ in $B_{q\infty}$, we must have $qp^2 \mid \mathrm{disc}(S_t)$, so that $\mathrm{disc}(S_t)$ defines an element of $\mathcal{S}(\Delta_1\Delta_2,1)$ divisible by $p^2q$. 
\end{proof}
\end{lemma} 

If $\Delta_1/\Delta_2=q^{2r}$ for some integer $r$, then $P_1$ and $P_2$ can intersect at $q$ even if $\mathcal{S}(\Delta_1\Delta_2,1)$ has no elements divisible by $p^2q$. Let $\Delta$ be the ``$q$-fundamental'' discriminant dividing both $\Delta_1$ and $\Delta_2$ (that is, $\Delta_1$ divided by the maximal even power of $q$ such that the result is still a discriminant). Then by \cite[Proposition 2.2]{LV15}, both embeddings $\cO_1,\cO_2\to\mathrm{End}(E)$ land in an order of discriminant $\Delta$, so it is possible that they land in the same subfield of $B_{q\infty}$. We investigate such \emph{degenerate intersections} in the following lemma.

\begin{lemma}\label{lem:degenerate}
    For each $i=1,2$, let $E_i/W$ be an elliptic curve with CM by an order $\cO_i$ of discriminant $\Delta_i$ in which $p$ is inert. Let each $E_i$ be equipped with the $\mathcal{C}_{\ns}^+(p)$ level structure arising from the map $\cO_i\to \End(E_i[p])$.
    Suppose that $\Delta_1=q^{2r}\Delta_2$ for some $r>0$, and that $\mathcal{S}(\Delta_1\Delta_2,p^2)$ is empty. Then an isomorphism $E_1\to E_2$ over $W/\fq^n$ preserves the level structures 
    if and only if the induced isomorphism 
    \[
    \phi : \End_{W/\fq^n}(E_1)\ \stackrel{\sim}{\lra} \ \End_{W/\fq^n}(E_2)
    \]
    has the property that $\phi(\cO_1) \subset \cO_2$. 
    \begin{proof}
        First suppose an isomorphism $E_1\to E_2$ over $W/\fq^n$ preserves the $\mathcal{C}_{\ns}^+(p)$ level structure. Since $\mathcal{S}(\Delta_1\Delta_2,p^2)$ is empty, there cannot exist any non-split Cartan order containing linearly independent copies of $\cO_1$ and $\cO_2$, so the isomorphism $\End_{W/\fq^n}(E_1)\to \End_{W/\fq^n}(E_2)$ must send $\cO_1$ into $\cO_2$.
        
        Conversely, suppose we have an isomorphism over $W/\fq^n$ sending $\cO_1$ into $\cO_2$. The images of $\cO_1$ and $\cO_2$ in $R:=\End_{W/\fq^nW}(E_2)$ generate the same subalgebra of $R/pR$ because both conductors are coprime to $p$. This subalgebra determines the non-split Cartan structure on both $E_1$ and $E_2$, so the isomorphism preserves the Cartan structure.
    \end{proof}
\end{lemma}

\section{Intersections of rational CM points}
\label{subsec:RW}
Rebolledo--Wuthrich \cite{RW25} compute with rational points on $X_{\ns}^+(p)$ using a moduli interpretation over $\Z[1/6p]$ in terms of \textit{necklaces}, see \cite{RW18}. As an application, they determine for $5 \leq p \leq 47$, and for any pair of Heegner discriminants of class number one  
\begin{equation}
\label{eqn:classno1}
\Delta_1,\Delta_2 \ \in \ \{ -3, -4, -7, -8, -11, -12,-16, -19, -27,-28, -43, -67, -163 \}
\end{equation}
a list of primes $q \neq 2,3,p$ for which the corresponding CM points on $X_{\ns}^+(p)$ have the same reduction modulo $q$. They expect that this is the full list of all such intersections, for any $p$. We confirm their results by computing the intersection multiplicities at all primes $q$ (including $q=2,3,p$) of the CM points in $X_{\ns}^+(p)(\Q)$ for all primes $p$. The ``intersection numbers'' $\exp(\langle P_1,P_2\rangle)$ are recorded in \cref{fig:Xnsp_intersections}.

\subsection{Genus zero}

For each $p=2,3,5,7$, we provide an integral model for $M\simeq \mathbb{P}^1_{\Z}$ for $X_{\ns}^+(p)$ in \cite{github}. Specificially, we provide a morphism $j:\mathbb{P}^1_{\Z}\to \mathbb{P}^1_{\Z}$ whose restriction to the generic fibre factors as an isomorphism $\phi:\mathbb{P}^1_{\Q}\to X_{\ns}^+(p)$ followed by the $j$-invariant map $X_{\ns}^+(p)\to\mathbb{P}^1_{\Q}$ (taken from \cite{LMFDB}), and we use $\phi$ to identify the domain of $j$ as an integral model for $X_{\ns}^+(p)$.
The intersection numbers in \cref{fig:Xnsp_intersections} were computed using this model. For each pair $\Delta_1,\Delta_2$ of coprime fundamental Heegner discriminants, these numbers --- the blue entries in the table --- were confirmed to agree with the predictions of \cref{thm:B} (with one caveat, see \cref{rmk:nonheegner}). 

Now let $\cX_{\ns}^+(p) \hookrightarrow X := \PP^1_{\Z}$ be an open immersion as in \cref{prop:model}. To show that the intersection numbers in \cref{fig:Xnsp_intersections} agree with those on $\cX_{\ns}^+(p)$, it suffices to confirm, for each prime $q$, that $M_{\Z_q}\simeq X_{\Z_q}$ as models of $X_{\ns}^+(p)$. We prove this by showing that both models share a certain distinguishing property. 
\begin{lemma}\label{lem:unique_P1}
    Let $x,y,z \in \PP^1(\Q_q)$ be distinct points, and $m \geq 0$. Then $\PP^1_{\Q_q}$ has a model $M \simeq \PP^1_{\Z_q}$, unique up to $\Z_q$-isomorphism, such that the points have intersections $\langle x,z \rangle = \langle y,z \rangle = 0$ and $\langle x,y \rangle=m$.

    \noindent \begin{minipage}{.75\textwidth}
    \begin{proof}
    
    The collection of isomorphism classes over $\Z_q$ of models $M \simeq \mathbb{P}^1_{\Z_q}$ is parametrized by the vertices of the $(q+1)$-regular Bruhat--Tits tree $\mathcal{T}$ whose ends are in bijection with $\PP^1(\Q_q)$. Explicitly, given any such model $M$, every blow-up at one of its $\F_{q}$-rational points, followed by a contraction of the original special fibre, yields a neighbouring model $M'$, see \cite{DT07}. 

    In the model determined by the vertex $v$, the intersection multiplicity of any $x,y \in \PP^1(\Q_q)$ is equal to the distance of $v$ to the path $\{x\to y\}$ in $\mathcal{T}$. So if a vertex $v$ determines a model that satisfies the conditions of the lemma, it must lie on the paths $\{x\to z\}$ and $\{y\to z\}$, and have distance $m$ from the path $\{x\to y\}$. There is a unique vertex $v$ with this property.
    \end{proof}
    \end{minipage} 
    \hfill 
    \begin{minipage}{.25\textwidth}
    \tikzmath{\a = 9; \r = 0.6;}
    \begin{center}
        \begin{tikzpicture}
          [grow cyclic,
           level 1/.style={thick,level distance=\a mm,sibling angle=120},
           level 2/.style={level distance=\a*\r mm,sibling angle=120},
           level 3/.style={level distance=\a*\r*\r mm,sibling angle=120},
           level 4/.style={level distance=\a*\r*\r*\r mm,sibling angle=120},
           level 5/.style={level distance=\a*\r*\r*\r*\r mm,sibling angle=120},
           level 6/.style={level distance=\a*\r*\r*\r*\r*\r mm,sibling angle=120}]
          \coordinate [rotate=0] 
            child foreach \x in {1,2,3}
              {child foreach \x in {1,2}
               {child foreach \x in {1,2}
                {child foreach \x in {1,2}
                {child foreach \x in {1,2}
                {child foreach \x in {1,2}}}}}};
          \node (x) at (-1.28,1.45) {\scriptsize $x$};
          \node (y) at (0.53,-1.55) {\scriptsize $y$};
          \node (z) at (1.35,1.1) {\scriptsize $z$};
    	  \node[draw,fill=ForestGreen,circle,minimum size=.6mm,inner sep=0pt] (A) at (.9,0) {};
          \node[ForestGreen] (v) at (0.78,0.2) {\scriptsize $v$};
          \draw[red,thick] (0,0) -- ++ (120:\a mm) -- ++ (180:\a*\r mm) -- ++ (120:\a*\r*\r mm) -- ++ (60:\a*\r*\r*\r mm) -- ++ (120:\a*\r*\r*\r*\r mm) -- ++ (180:\a*\r*\r*\r*\r*\r mm);
          \draw[red,thick] (0,0) -- ++ (240:\a mm) -- ++ (300:\a*\r mm) -- ++ (0:\a*\r*\r mm) -- ++ (300:\a*\r*\r*\r mm) -- ++ (0:\a*\r*\r*\r*\r mm) -- ++ (300:\a*\r*\r*\r*\r*\r mm);
    
        \end{tikzpicture}
    \end{center}
    \end{minipage}

\end{lemma}

Given a prime $q$, we may use \cref{lem:unique_P1} to ensure that the model $M$ over $\Z_q$ used in our computations \cite{github} yields the correct intersection numbers of all CM points on $\cX_{\rm ns}^+(p)$, by judiciously choosing a triple $x,y,z$ and verifying that they have the correct pairwise intersection numbers on $M$. Let $x,y,z\in\cX_{\ns}^+(p)(\Z)$ be Heegner points with coprime fundamental discriminants that do not all intersect at $q$. Such a triple exists for all pairs of primes $(p,q)\neq (2,2)$, as can be seen from the blue entries of \cref{fig:Xnsp_intersections} given by \cref{thm:B}. For $(p,q)=(2,2)$, we instead take two Heegner points $x,y\in\cX_{\ns}^+(2)(\Z)$ with coprime fundamental discriminants, and $z\in X_{\ns}^+(2)(\Q)$ a point with $j(z) \not \in \Z_2$. Then the corresponding point in $X(\Z_2)$ does not intersect $x$ or $y$. 

\begin{remark}
    The unusually cultured (or unusually-cultured) reader may notice that the intersection numbers on $\cX_{\ns}^+(2)$ are very similar to the factorizations of differences of $j$-invariants: for example, compare the first equation in \cite{GZ85} to entry $(-4,-163)$ in the $p=2$ section of \cref{fig:Xnsp_intersections}. Indeed, the intersection number of $P_1$ and $P_2$ on $\cX_{\ns}^+(2)$ equals the intersection number of $j(P_1)/2^{12}$ and $j(P_2)/2^{12}$ on $\mathbb{P}^1_{\Z}$.
\end{remark}

\begin{remark}\label{rmk:nonheegner}
    There are two distinct points in $X_{\ns}^+(5)(\Q)$ with CM discriminant $-3$, as an elliptic curve $E/\Q$ with $j$-invariant $0$ can be given two non-equivalent non-split Cartan structures  of level $5$ over $\Q$. 
    
    Here is a brief explanation of the source of this extra CM point. There are $10$ non-split Cartan structures on $E[5]$ over $\overline{\Q}$, and $\Aut(E)$ acts on this set. The Heegner structure, induced by the action of the CM order $\End(E)=\Z[\frac12(1+\sqrt{-3})]$ on $E[5]$, is naturally fixed by this action. The other nine structures come in orbits of size $3$, with each orbit defining a single point of $X_{\ns}^+(5)(\overline{\Q})$. For an appropriate basis for $E[5]$, the Heegner structure $A\subset \End(E[5])$ can be identified with the subring of $M_2(\F_5)$ consisting of the elements $\begin{psmallmatrix}
        a & 2b \\ b & a
    \end{psmallmatrix}$ for $a,b\in\F_5$. Now consider the subring $A_2\subset M_2(\F_5)$ consisting of elements $\begin{psmallmatrix}
        a & 3b \\ b & a
    \end{psmallmatrix}$ for $a,b\in\F_5$. One can check that all elements of the $2$-Sylow subgroup of the normalizer $N(A^\times)$ of $A^\times$ also normalize $A_2$, and the elements of order $3$ in $A^\times$ act via $\Aut(E)$, so all $48$ elements of $N(A^\times)$ preserve the $\Aut(E)$-orbit of $A_2$. As the action of $\Gal(\overline{\Q}/\Q)$ on $E[5]$ factors through $N(A^\times)$, we can conclude that the $\Aut(E)$-orbit of $A_2$, and hence the corresponding point on $X_{\ns}^+(5)$, is Galois-invariant. The remaining two orbits determine a Galois-conjugate pair of points in $X_{\ns}^+(5)(\Q(\sqrt{-15}))$.
    
    In \cref{fig:Xnsp_intersections}, we refer to the CM points associated to the Heegner structure $A$ and the additional $\Q$-rational Cartan structure $A_2$ by the labels ``$-3$'' and ``$-3_{2}$'' respectively. Intersections involving $-3_{2}$ may not satisfy \cref{thm:B}, even though the CM discriminant $-3$ satisfies the non-split Heegner condition for $p=5$.
\end{remark}

\begin{remark}
    Aside from the non-Heegner CM point $-3_2$ on $X_{\ns}^+(5)$ from \cref{rmk:nonheegner}, there are also rational CM points with non-Heegner discriminant: 
    \[
    \begin{array}{llll}
    \Delta &=& -4,-7,-8,-12,-16,-28 & \mbox{on $X_{\ns}^+(2)$,}\\
    \Delta &=& -3,-8,-11 & \mbox{on $X_{\ns}^+(3)$,}\\
    \Delta &=& -3 &\mbox{on $X_{\ns}^+(7)$.}
    \end{array}
    \]
    For $p\geq 11$, on the other hand, by the discussion in \cref{rmk:non-fundamental}, every rational CM point is a Heegner point.
\end{remark}

\subsection{Positive genus}

From now on we assume $p\geq 11$. As $\Delta_1, \Delta_2$ range over \cref{eqn:classno1}, we find exactly seven triples $(\Delta_1,\Delta_2,p)$ such that $\mathcal{S}(\Delta_1\Delta_2,1)$ contains a multiple of $p^2$:
\begin{align*}
    3\cdot 11^2&\in \mathcal{S}(16\cdot 163,1), & 18\cdot 11^2&\in \mathcal{S}(67\cdot 163,1),     & 8\cdot 11^2&\in \mathcal{S}(27\cdot 163,1), & 2\cdot 11^2&\in \mathcal{S}(27\cdot 67,1), \\
    2\cdot 13^2&\in \mathcal{S}(11\cdot 163,1), & 12\cdot 13^2&\in \mathcal{S}(67\cdot 163,1), &
    2\cdot 23^2&\in \mathcal{S}(27\cdot 163,1).
\end{align*}
In each case, the multiple of $p^2$ in $\mathcal{S}(\Delta_1\Delta_2,1)$ is coprime to the conductors of $\Delta_1$ and $\Delta_2$, so the intersection numbers of the associated Heegner points on $\cX_{\ns}^+(p)$ can be computed using \cref{prop:non-fundamental}. Alternatively, note that
intersections are only possible at $q=2,3$ by \cref{lem:no_intersect}. The LMFDB \cite{LMFDB} gives explicit models over $\Q$ of $X_{\ns}^+(p)$ for $p\in\{11,13,23\}$. For each pair $(p,q)$, we use Magma to list all $\F_q$-points of the model for $X_{\ns}^+(p)$ and verify smoothness at each point. For $p\in\{11,13\}$ we can then compute intersection multiplicities directly; for $p=23$, we use \cite[Table 8.3]{MS20} to check that the Heegner points for discriminants $-27$ and $-163$ intersect at $2$ with multiplicity $1$.

Finally we consider the case that $\mathcal{S}(\Delta_1\Delta_2,1)$ has no multiples of $p^2$. It follows from \cref{lem:no_intersect} that an intersection of distinct Heegner points in $X_{\rm ns}^+(p)(\Q)$ is only possible if $\Delta_1/\Delta_2$ is an even power of some prime $q$, in which case these points may only intersect at $q$. By \cref{lem:int equals isoms}, the intersection multiplicity can be computed by counting the number of isomorphisms over $W/\fq^n$ that preserve the level structure. By \cref{lem:degenerate}, this is equivalent to counting the number of isomorphisms over $W/\fq^n$ that send $\cO_1$ into the fraction field of $\cO_2$. Observe that this count does not depend on $p$: that is, if $p_0\geq 11$ is another prime inert in the CM orders $\cO_1$ and $\cO_2$, and if $\mathcal{S}(\Delta_1\Delta_2,p_0^2)$ is also empty, then the intersection multiplicities at $q$ of the Heegner points on $X_{\ns}^+(p)$ and $X_{\ns}^+(p_0)$ associated to $\Delta_1$ and $\Delta_2$  are equal. So for the pairs
\[
(\Delta_1, \Delta_2) = (-3,-12), \ (-3,-27), \ (-4,-16),\  (-7,-28),
\]
we only need to compute the intersection multiplicity at $q=2$, $3$, $2$, or $2$ respectively on $X_{\ns}^+(p_0)$ for a single prime $p_0\geq 11$ satisfying the conditions of \cref{lem:degenerate}: for $(-7,-28)$ we may take $p_0=13$, and for the other pairs we may take $p_0=11$. The intersection multiplicity equals that of the corresponding Heegner points on $X_{\ns}^+(p)$, for all $p$ satisfying the same conditions. This yields the final four entries of \cref{fig:Xnsp_intersections}.

\setlength\biblabelsep{0pt}
\printbibliography

\renewcommand{\arraystretch}{1.1}

\begin{table}
\centering
\rotatebox{90}{
\begin{minipage}{0.9\textheight}
\centering
\begin{tabular}{c}
$p=2$:\\
\resizebox{\textwidth}{!}{
$
\begin{array}{|c|cccccccccccc|}
 \hline
 \Delta_1, \Delta_2 &  -4  & -7  & -8  & -11  & -12  & -16  & -19  & -27  & -28  & -43  & -67  & -163  \\
\hline
-3 &  3^{3}  &  3^{3}\,5^{3}  &  5^{3}  & \color{blue}  2^{3}  &  3^{3}\,5^{3}  &  3^{3}\,11^{3}  & \color{blue}  2^{3}\,3^{3}  &  2^{3}\,3^{1}\,5^{3}  &  3^{3}\,5^{3}\,17^{3}  & \color{blue}  2^{6}\,3^{3}\,5^{3}  & \color{blue}  2^{3}\,3^{3}\,5^{3}\,11^{3}  & \color{blue}  2^{6}\,3^{3}\,5^{3}\,23^{3}\,29^{3}  \\
-4 &  &  2^{6}\,3^{6}\,7^{1}  &  2^{7}\,7^{2}  &  7^{2}\,11^{1}  &  2^{6}\,3^{3}\,11^{2}  &  2^{6}\,3^{6}\,7^{2}  &  3^{6}\,19^{1}  &  3^{1}\,11^{2}\,23^{2}  &  2^{6}\,3^{8}\,7^{1}\,19^{2}  &  3^{8}\,7^{2}\,43^{1}  &  3^{6}\,7^{2}\,31^{2}\,67^{1}  &  3^{6}\,7^{2}\,11^{2}\,19^{2}\,127^{2}\,163^{1}  \\
-7 &  &  &  2^{6}\,5^{3}\,7^{1}\,13^{1}  &  7^{1}\,13^{1}\,17^{1}\,19^{1}  &  2^{8}\,3^{3}\,5^{3}\,17^{1}  &  2^{9}\,3^{7}\,7^{1}\,19^{1}  &  3^{7}\,13^{1}\,31^{1}  &  3^{1}\,5^{3}\,17^{1}\,41^{1}\,47^{1}  &  2^{13}\,3^{6}\,5^{3}\,7^{1}\,13^{1}  &  3^{6}\,5^{3}\,7^{1}\,19^{1}\,73^{1}  &  3^{7}\,5^{3}\,7^{1}\,13^{1}\,61^{1}\,97^{1}  &  3^{8}\,5^{3}\,7^{1}\,13^{1}\,17^{1}\,31^{1}\,103^{1}\,229^{1}\,283^{1}  \\
-8 &  &  &  &  7^{2}\,13^{1}  &  2^{6}\,5^{3}\,23^{1}  &  2^{6}\,7^{2}\,23^{1}\,31^{1}  &  13^{1}\,29^{1}\,37^{1}  &  5^{3}\,29^{1}\,53^{1}  &  2^{6}\,5^{3}\,7^{1}\,13^{1}\,31^{1}\,47^{1}  &  5^{3}\,7^{2}\,37^{1}\,61^{1}  &  5^{4}\,7^{2}\,13^{1}\,53^{1}\,109^{1}  &  5^{3}\,7^{2}\,13^{1}\,37^{1}\,101^{1}\,157^{1}\,277^{1}\,317^{1}  \\
-11 &  &  &  &  &  11^{1}\,17^{1}\,29^{1}  &  7^{2}\,19^{1}\,43^{1}  & \color{blue}  2^{4}\,13^{1}  &  2^{4}\,11^{1}\,17^{1}  &  7^{1}\,13^{1}\,41^{1}\,61^{1}\,73^{1}  & \color{blue}  2^{3}\,7^{2}\,19^{1}\,29^{1}  & \color{blue}  2^{5}\,7^{2}\,13^{1}\,41^{1}\,43^{1}  & \color{blue}  2^{3}\,7^{2}\,11^{1}\,13^{1}\,17^{1}\,73^{1}\,79^{1}\,107^{1}\,109^{1}  \\
-12 &  &  &  &  &  &  2^{8}\,3^{3}\,23^{1}\,47^{1}  &  3^{3}\,41^{1}\,53^{1}  &  3^{1}\,5^{3}\,11^{2}\,17^{1}  &  2^{8}\,3^{3}\,5^{3}\,59^{1}\,83^{1}  &  3^{3}\,5^{4}\,29^{1}\,113^{1}  &  3^{3}\,5^{3}\,101^{1}\,137^{1}\,197^{1}  &  3^{3}\,5^{3}\,11^{2}\,17^{1}\,89^{1}\,233^{1}\,293^{1}\,389^{1}  \\
-16 &  &  &  &  &  &  &  3^{7}\,67^{1}  &  3^{1}\,59^{1}\,83^{1}\,107^{1}  &  2^{9}\,3^{6}\,7^{1}\,31^{1}\,103^{1}  &  3^{6}\,7^{2}\,19^{1}\,163^{1}  &  3^{8}\,7^{2}\,11^{3}\,43^{1}  &  3^{7}\,7^{2}\,59^{1}\,67^{1}\,211^{1}\,571^{1}\,643^{1}  \\
-19 &  &  &  &  &  &  &  &  2^{5}\,3^{1}\,29^{1}  &  3^{6}\,13^{1}\,19^{1}\,97^{1}  & \color{blue}  2^{3}\,3^{6}\,37^{1}  & \color{blue}  2^{4}\,3^{7}\,13^{1}\,79^{1}  & \color{blue}  2^{3}\,3^{7}\,13^{1}\,19^{1}\,31^{1}\,37^{1}\,67^{1}\,193^{1}  \\
-27 &  &  &  &  &  &  &  &  &  3^{1}\,5^{4}\,89^{1}\,173^{1}  &  2^{3}\,3^{1}\,5^{3}\,71^{1}  &  2^{4}\,3^{1}\,5^{3}\,53^{1}\,113^{1}  &  2^{3}\,3^{1}\,5^{4}\,11^{2}\,17^{1}\,59^{1}\,137^{1}\,257^{1}  \\
-28 &  &  &  &  &  &  &  &  &  &  3^{8}\,5^{3}\,7^{1}\,157^{1}  &  3^{6}\,5^{3}\,7^{1}\,13^{1}\,41^{1}\,433^{1}  &  3^{6}\,5^{4}\,7^{1}\,13^{1}\,19^{2}\,73^{1}\,241^{1}\,997^{1}  \\
-43 &  &  &  &  &  &  &  &  &  &  & \color{blue}  2^{3}\,3^{6}\,5^{3}\,7^{2}  & \color{blue}  2^{7}\,3^{6}\,5^{3}\,7^{3}\,37^{1}\,433^{1}  \\
-67 &  &  &  &  &  &  &  &  &  &  &  & \color{blue}  2^{3}\,3^{7}\,5^{3}\,7^{2}\,13^{1}\,139^{1}\,331^{1}  \\
\hline
\end{array}
$
}
\end{tabular}

\vspace{6pt}

\begin{tabular}{c}
$p=3$:\\

\begin{scriptsize}
$
\begin{array}{|c|cccccccccc|}
 \hline
 \Delta_1, \Delta_2 &  -4  & -7  & -8  & -11  & -16  & -19  & -28  & -43  & -67  & -163  \\
\hline
-3 &  2^{2}  &  5^{1}  &  2^{2}\,3^{2}\,5^{1}  &  2^{5}\,3^{2}  &  2^{1}\,11^{1}  &  2^{5}  &  5^{1}\,17^{1}  &  2^{6}\,5^{1}  &  2^{5}\,5^{1}\,11^{1}  &  2^{6}\,5^{1}\,23^{1}\,29^{1}  \\
-4 &  & \color{blue}  1  &  2^{3}  &  2^{2}\,11^{1}  &  2^{1}  & \color{blue}  2^{2}  &  3^{2}  & \color{blue}  2^{2}\,3^{2}  & \color{blue}  2^{2}\,7^{2}  & \color{blue}  2^{2}\,7^{2}\,11^{2}  \\
-7 &  &  &  5^{1}\,7^{1}  &  17^{1}  & \color{Green}  3^{1}  & \color{blue}  3^{1}  &  2^{1}\,5^{1}  & \color{blue}  5^{1}\,7^{1}  & \color{blue}  3^{1}\,5^{1}\,13^{1}  & \color{blue}  3^{2}\,5^{1}\,17^{1}\,31^{1}  \\
-8 &  &  &  &  2^{2}\,3^{3}\,13^{1}  &  2^{1}\,23^{1}  &  2^{2}\,29^{1}  &  5^{1}\,47^{1}  &  2^{2}\,5^{1}\,7^{2}  &  2^{2}\,5^{2}\,53^{1}  &  2^{2}\,5^{1}\,101^{1}\,317^{1}  \\
-11 &  &  &  &  &  2^{1}\,7^{2}  &  2^{6}  &  7^{1}\,41^{1}  &  2^{5}\,29^{1}  &  2^{7}\,41^{1}  &  2^{5}\,11^{1}\,17^{1}\,107^{1}  \\
-16 &  &  &  &  &  &  2^{1}\,3^{1}  &  7^{1}  &  2^{1}\,19^{1}  &  2^{1}\,3^{2}\,11^{1}  &  2^{1}\,3^{1}\,59^{1}\,67^{1}  \\
-19 &  &  &  &  &  &  &  13^{1}  & \color{blue}  2^{5}  & \color{blue}  2^{6}\,3^{1}  & \color{blue}  2^{5}\,3^{1}\,13^{1}\,19^{1}  \\
-28 &  &  &  &  &  &  &  &  3^{2}\,5^{1}  &  5^{1}\,41^{1}  &  5^{2}\,13^{1}\,73^{1}  \\
-43 &  &  &  &  &  &  &  &  & \color{blue}  2^{5}\,5^{1}  & \color{blue}  2^{7}\,5^{1}\,37^{1}  \\
-67 &  &  &  &  &  &  &  &  &  & \color{blue}  2^{5}\,3^{1}\,5^{1}\,7^{2}  \\
\hline
\end{array}
$
\end{scriptsize}
\end{tabular}

\vspace{6pt}

\begin{tabular}{cc}
$p=5$: 
& 
$p=7$: \\
\begin{scriptsize}
$
\begin{array}{|c|ccccccccc|}
 \hline
 \Delta_1, \Delta_2 &  -3_{2}  & -7  & -8  & -12  & -27  & -28  & -43  & -67  & -163  \\
\hline
-3 &  1  & \color{blue}  1  & \color{blue}  1  &  2^{1}  &  3^{1}  & \color{Green}  1  & \color{blue}  1  & \color{blue}  2^{3}  & \color{blue}  2^{3}  \\
-3_{2} &  &  1  &  2^{1}  &  3^{1}  &  2^{2}  &  3^{1}  &  2^{2}  &  11^{1}  &  29^{1}  \\
-7 &  &  & \color{blue}  1  & \color{Green}  1  & \color{Green}  1  &  2^{1}  & \color{blue}  3^{1}  & \color{blue}  3^{1}  & \color{blue}  3^{1}\,7^{1}  \\
-8 &  &  &  &  1  & \color{Green}  2^{1}  &  1  & \color{blue}  2^{1}  & \color{blue}  5^{1}  & \color{blue}  13^{1}  \\
-12 &  &  &  &  &  1  &  3^{1}  & \color{Green}  5^{1}  &  2^{1}  &  2^{1}\,17^{1}  \\
-27 &  &  &  &  &  & \color{Green}  5^{1}  & \color{Green}  2^{3}  &  1  &  5^{1}\,11^{1}  \\
-28 &  &  &  &  &  &  &  1  &  13^{1}  &  5^{1}  \\
-43 &  &  &  &  &  &  &  & \color{blue}  3^{1}\,7^{1}  & \color{blue}  3^{1}  \\
-67 &  &  &  &  &  &  &  &  & \color{blue}  2^{4}\,3^{2}  \\
\hline
\end{array}
$
\end{scriptsize}

&

\begin{scriptsize}
$
\begin{array}{|c|ccccccc|}
 \hline
 \Delta_1, \Delta_2 &  -4  & -8  & -11  & -16  & -43  & -67  & -163  \\
\hline
-3 &  3^{1}  &  2^{1}  &  1  &  1  &  5^{1}  &  2^{2}  &  3^{1}  \\
-4 &  &  1  & \color{blue}  1  &  2^{1}  & \color{blue}  1  & \color{blue}  1  & \color{blue}  3^{2}  \\
-8 &  &  & \color{blue}  1  &  1  & \color{blue}  1  & \color{blue}  2^{1}  & \color{blue}  5^{1}  \\
-11 &  &  &  & \color{Green}  1  & \color{blue}  2^{1}  & \color{blue}  1  & \color{blue}  2^{2}  \\
-16 &  &  &  &  & \color{Green}  3^{1}  & \color{Green}  3^{1}  &  1  \\
-43 &  &  &  &  &  & \color{blue}  3^{1}  & \color{blue}  2^{1}\,7^{1}  \\
-67 &  &  &  &  &  &  & \color{blue}  13^{1}  \\
\hline
\multicolumn{8}{c}{}\\
\multicolumn{8}{c}{}\\
 \end{array}
$
\end{scriptsize}
\end{tabular}

\vspace{6pt}

\begin{tabular}{c}
$p\geq 11$ (all intersection numbers $\neq 1$): \\
\begin{scriptsize}
    $
	\begin{array}{|c|cccc|cc|c|cc|c|c|}
		\hline
		p & \multicolumn{4}{c|}{11} & \multicolumn{2}{c|}{13} & 23 & \multicolumn{2}{c|}{2\bmod 3} & 3\bmod 4 & 3,5,6\bmod 7 \\
        \Delta_1 & -16 & -27 & -27 & -67 & -11 & -67 & -27 & -3 & -3 & -4 & -7 \\
        \Delta_2 & -163 & -67 & -163 & -163 & -163 & -163 & -163 & -12 & -27 & -16 & -28\\
        \hline
        & \color{Green} 3^1 & \color{Green} 2^1 & \color{Green} 2^2 & \color{blue} 2^1 & \color{blue} 2^1 & \color{blue} 3^1 & \color{Green} 2^1 & \color{BrickRed} 2^1 & \color{BrickRed} 3^1 & \color{BrickRed} 2^1 & \color{BrickRed} 2^1 \\
        \hline
	\end{array}
	$
\end{scriptsize}
\end{tabular}
\end{minipage}
}

    \caption{Intersection numbers $\exp(\langle P_1,P_2\rangle)$ of CM points $P_1,P_2$ in $X_{\ns}^+(p)(\Q)$, coloured {\color{blue}blue} when \cref{thm:B} applies, {\color{Green}green} when \cref{prop:non-fundamental} applies (but not \cref{thm:B}), and {\color{BrickRed}red} when \cref{lem:degenerate} applies. In all other cases it is coloured black.}
    \label{fig:Xnsp_intersections}
\end{table}

\end{document}

%% file: preamble.tex
\usepackage[dvipsnames]{xcolor}
\usepackage{amsthm,amssymb,amsmath,amsfonts,mathrsfs,amscd,yfonts,stmaryrd}
\usepackage{url,tikz,enumitem,caption,stmaryrd,comment,mathrsfs,marginnote,tikz-cd,mathtools,pgfplots}
\usepackage{dcolumn,booktabs,lscape}
\usepackage[utf8]{inputenc}
\usepackage[T1]{fontenc}
\usepackage{libertine}
\usetikzlibrary{matrix,positioning,arrows,shapes,chains,calc,automata,backgrounds}
\usetikzlibrary{decorations.pathmorphing}
\usetikzlibrary{arrows,automata,positioning}
\usetikzlibrary{decorations.markings,positioning}
\usetikzlibrary{arrows.meta}
\usetikzlibrary{arrows,automata,positioning,arrows.meta,math,calc,trees,optics}
\usepackage{tgcursor}

\usetikzlibrary{decorations.pathmorphing}
\usetikzlibrary{arrows,automata,positioning}
\usetikzlibrary{calc}
\usepackage{hyperref}
\usepackage{framed}
\usepackage{rotating}
\usepackage[nameinlink]{cleveref}

\usepackage[backref,style=alphabetic,doi=false,url=false,isbn=false,maxbibnames=99]{biblatex}
\AtBeginBibliography{\footnotesize}
\DefineBibliographyStrings{english}{%
	backrefpage = {$\uparrow$},
	backrefpages = {$\uparrow$},
}

\DeclareFieldFormat{postnote}{#1}
\DeclareFieldFormat{multipostnote}{#1}

\let\oldtocsection=\tocsection
\let\oldtocsubsection=\tocsubsection
\renewcommand{\tocsection}[2]{\vspace{-.3em}\hspace{0em}\oldtocsection{#1}{#2}}
\renewcommand{\tocsubsection}[2]{\vspace{-.3em}\hspace{2em}\oldtocsubsection{#1}{#2}}

\newlength{\proofmargin}
\DeclareMathOperator{\SL}{\mathrm{SL}}
\DeclareMathOperator{\GL}{\mathrm{GL}}

\DeclareMathOperator{\Gal}{\mathrm{Gal}}

\DeclareMathOperator{\OO}{\mathrm{O}}

\DeclareMathOperator{\Q}{\mathbb{Q}}
\DeclareMathOperator{\F}{\mathbb{F}}
\DeclareMathOperator{\Z}{\mathbb{Z}}

\DeclareMathOperator{\A}{\mathbb{A}}
\DeclareMathOperator{\C}{\mathbb{C}}
\DeclareMathOperator{\PP}{\mathbb{P}}

\DeclareMathOperator{\cC}{\mathcal{C}}

\DeclareMathOperator{\cO}{\mathcal{O}}
\DeclareMathOperator{\cM}{\mathcal{M}}

\DeclareMathOperator{\cS}{\mathcal{S}}
\DeclareMathOperator{\cX}{\mathscr{X}}
\DeclareMathOperator{\cL}{\mathcal{L}}
\DeclareMathOperator{\cP}{\mathcal{P}}
\DeclareMathOperator{\cE}{\mathcal{E}}

\DeclareMathOperator{\fa}{\mathfrak{a}}

\DeclareMathOperator{\fq}{\mathfrak{q}}

\DeclareMathOperator{\spl}{\scriptstyle \mathrm{sp}}
\DeclareMathOperator{\ns}{\scriptstyle \mathrm{ns}}
\DeclareMathOperator{\Car}{\scriptstyle \mathrm{Car}}
\DeclareMathOperator{\lra}{\longrightarrow}
\DeclareMathOperator{\bE}{\mathbf{E}}
\DeclareMathOperator{\End}{\mathrm{End}}
\DeclareMathOperator{\Aut}{\mathrm{Aut}}

\newtheorem{theorem}{Theorem}[section]
\newtheorem{prop}[theorem]{Proposition}

\newtheorem{lemma}[theorem]{Lemma}
\newtheorem{corollary}[theorem]{Corollary}
\newtheorem{definition}[theorem]{Definition}
\newtheorem{thmx}{Theorem}
\theoremstyle{remark}
\newtheorem{remark}[theorem]{Remark}
\newtheorem{example}[theorem]{Example}

\crefname{section}{\S}{\S\S}
\crefname{subsection}{\S}{\S\S}
\Crefname{section}{Section}{Sections}
\Crefname{subsection}{Section}{Sections}
\crefformat{equation}{#2(#1)#3}
\crefname{prop}{proposition}{propositions}

\AddToHook{env/prop/begin}{\crefalias{theorem}{prop}}
\AddToHook{env/conjecture/begin}{\crefalias{theorem}{conjecture}}
\AddToHook{env/lemma/begin}{\crefalias{theorem}{lemma}}
\AddToHook{env/corollary/begin}{\crefalias{theorem}{corollary}}
\AddToHook{env/definition/begin}{\crefalias{theorem}{definition}}
\AddToHook{env/remark/begin}{\crefalias{theorem}{remark}}
\AddToHook{env/example/begin}{\crefalias{theorem}{example}}

\makeatletter
\renewenvironment{proof}[1][\proofname]%
{%
\par\pushQED{\qed}\normalfont\topsep6\p@\@plus6\p@\relax%
\begin{list}{}{\rightmargin=8pt\leftmargin=\proofmargin}%
  \item[\hskip\labelsep\bfseries#1\@addpunct{.}]\ignorespaces
}{%
\popQED\end{list}\@endpefalse%
}%
\makeatother

\usepackage{multiaudience}
\SetNewAudience{authors}
\SetNewAudience{shareable}